\newtheorem{proposition}{Proposition}[section]
\newtheorem{theorem}[proposition]{Theorem}
\newtheorem{prop}[proposition]{Proposition}
\newtheorem{cor}[proposition]{Corollary}
\theoremstyle{definition}
\newtheorem{example}[proposition]{Example}
\newtheorem{question}[proposition]{Question}
\theoremstyle{remark}
\newtheorem{remark}[proposition]{Remark}
\numberwithin{equation}{section}
\newcounter{margincounter}
\newcommand{\newword}[1]{\textbf{\emph{#1}}}
\newcommand{\eulerian}{\genfrac{\langle}{\rangle}{0pt}{}}
\newcommand{\set}[1]{\left\lbrace#1\right\rbrace}
\renewcommand{\Join}{\bigvee}
\newcommand{\inv}{\operatorname{inv}}
\newcommand{\meet}{\wedge}
\newcommand{\join}{\vee}
\newcommand{\E}{\mathcal{E}}
\newcommand{\F}{\mathcal{F}}
\newcommand{\G}{\mathcal{G}}
\newcommand{\pidown}{\pi_\downarrow}
\newcommand{\piup}{\pi^\uparrow}
\title[Noncrossing arc diagrams]{Noncrossing arc diagrams and canonical join representations}
\author{Nathan Reading}
\thanks{Research was conducted while the author was partially supported by NSF grant DMS-1101568.}
\subjclass[2010]{}
\begin{document}

\begin{abstract}
We consider two problems that appear at first sight to be unrelated.
The first problem is to count certain diagrams consisting of noncrossing arcs in the plane.
The second problem concerns the weak order on the symmetric group.
Each permutation $x$ has a canonical join representation: a unique lowest set of permutations joining to $x$.
%This set of elements necessarily consists of join-irreducible permutations.
The second problem is to determine which sets of permutations appear as canonical join representations.
The two problems turn out to be closely related because the noncrossing arc diagrams provide a combinatorial model for canonical join representations.
%The diagrams are in bijection with permutations, and the bijection allows the solution to the counting problem.
The same considerations apply more generally to lattice quotients of the weak order.
Considering quotients produces, for example, a new combinatorial object counted by the Baxter numbers and an analogous new object in bijection with generic rectangulations.
\end{abstract}
\maketitle

%\setcounter{tocdepth}{1}
%\tableofcontents

\section{Noncrossing arc diagrams}\label{diagrams sec}
The key objects in this paper are  \newword{noncrossing arc diagrams}, certain diagrams consisting of arcs satisfying certain rules, including the requirement that arcs not cross.
Each diagram begins with $n$ distinct points on a vertical line.
We identify the points with the numbers $1,\ldots,n$, with $1$ at the bottom.
Each diagram is consists of some (or no) curves called \newword{arcs} connecting the points.
Each arc must satisfy the following requirement:
\begin{enumerate}
\item[(A)] 
The arc connects a point $p$ to a strictly higher point $q$, moving monotone upwards from $p$ to $q$ and passing either to the left or to the right of each point between $p$ and $q$.
The arc may pass to the left of some points and to the right of others.
\end{enumerate}
The diagram must also satisfy the following two pairwise compatibility conditions:
\begin{enumerate}
\item[(C1)]
No two arcs intersect, except possibly at their endpoints.
\item[(C2)]\setcounter{enumi}{2} \label{diag def}
No two arcs share the same upper endpoint or the same lower endpoint.
\end{enumerate}
\begin{figure}[ht]
\scalebox{0.9}{\includegraphics{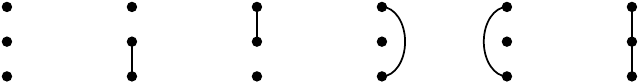}}
\caption{Noncrossing arc diagrams on $3$ points}
\label{diagrams3}
\end{figure}
Each noncrossing arc diagram determines some combinatorial data, namely which pairs of points are joined by an arc and which points are left and right of each arc.
Two noncrossing arc diagrams are considered to be combinatorially equivalent if they determine the same combinatorial data, and we consider arcs and noncrossing arc diagrams only up to combinatorial equivalence.

For $n=1$, there is one noncrossing arc diagram (with no arcs), and for $n=2$ there are two noncrossing arc diagrams (with one arc or no arcs).
Figures~\ref{diagrams3} and~\ref{diagrams4} show the $6$ noncrossing arc diagrams on $3$ points and the $24$ noncrossing arc diagrams on $4$ points.
\begin{figure}
\scalebox{0.82}{\includegraphics{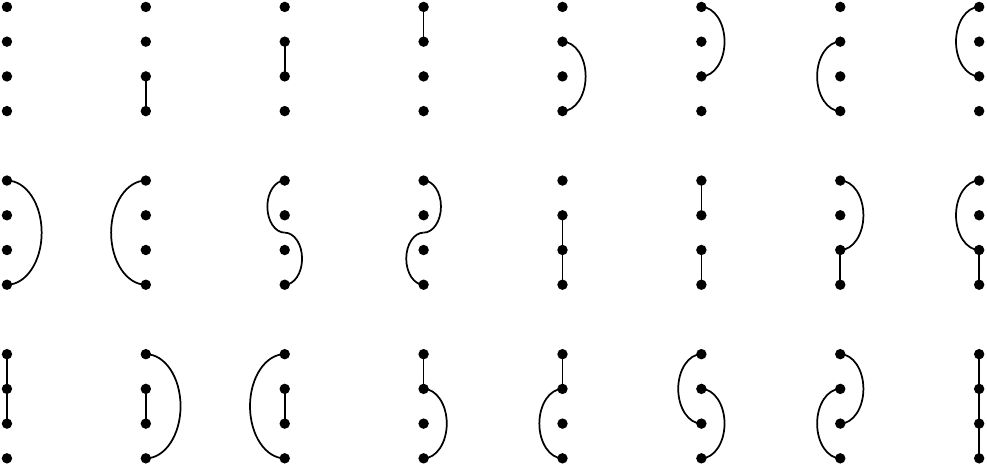}}
\caption{Noncrossing arc diagrams on $4$ points}
\label{diagrams4}
\end{figure}

Say two arcs are \newword{compatible} if there exists a noncrossing arc diagram containing those two arcs.
Compatibility is a combinatorial condition, depending only on the endpoints of the two curves and on which points are left and right of each curve:
Given arcs $\alpha_1$ and $\alpha_2$, suppose there is a point $p$ that is left of (or an endpoint of) $\alpha_1$ and is right of (or an endpoint of) $\alpha_2$, with $p$ not an endpoint of both arcs.
Then a noncrossing arc diagram containing $\alpha_1$ and $\alpha_2$ must have $\alpha_1$ to the right of $\alpha_2$.
The two curves are compatible if and only they don't share upper endpoints or lower endpoints and if there do not exist both a point $p$ forcing $\alpha_1$ to be right of $\alpha_2$ and a point $p'$ forcing $\alpha_2$ to be right of $\alpha_1$.
%More precisely, they are compatible if and only if a specific representative curve for each) they satisfy (C1) and (C2).
%Compatibility is a combinatorial condition, in the sense that it depends only on the combinatorial data defining the arcs.
%(Condition (C2) is manifestly combinatorial and condition (C1) on arcs $\alpha_1$ and $\alpha_2$ is equivalent to the following combinatorial requirement: If one of the $n$ points is left of arc $\alpha_1$ and right of arc $\alpha_1$, then there does not exist a point that is right of $\alpha_1$ and left of $\alpha_2$.)   \margin{That's not quite right... there are other ways to cross.}
Given any noncrossing arc diagram, the arcs in the diagram are pairwise compatible.
However, \textit{a priori} we don't know, given a collection of pairwise compatible arcs, if it is possible to fix a representative for each arc so that the \emph{representatives} satisfy (C1) pairwise.
In Section~\ref{proof sec}, we prove that it \emph{is} possible to fix such a set of representatives.

One family of noncrossing arc diagrams is very familiar.
A \newword{left arc} is an arc that does not pass to the right of any point, and we will call a noncrossing arc diagram having only left arcs a \newword{left noncrossing arc diagram}.
The left noncrossing arc diagrams are more commonly known as \newword{noncrossing partitions}, although it is more typical to draw points on a vertical line and allow only arcs that don't pass below any points.
The enumeration of noncrossing partitions is well-known, and we give it here in the language of this paper:
The number of left noncrossing arc diagrams on $n$ points is the Catalan number $C_n=\frac1{n+1}\binom{2n}n$.
The number of such diagrams with $k$ arcs is the Narayana number $\frac1n\binom n k\binom n{k-1}$.
One might similarly define a \newword{right arc} to be an arc that does not pass to the left of any point, and the same enumerative statements hold with ``left'' replaced by ``right.''
Surprisingly, something nice also happens when we mix right arcs with left arcs:

\begin{theorem}\label{diagrams left right}
The number of noncrossing arc diagrams on $n$ points having only left arcs and right arcs is the Baxter number 
\[B(n)=\binom{n+1}{1}^{-1}\binom{n+1}{2}^{-1}\,\,\sum_{k=0}^{n-1}\binom{n+1}{k}\binom{n+1}{k+1}\binom{n+1}{k+2}.\]
\end{theorem}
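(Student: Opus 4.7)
The plan is to reduce the enumeration to counting \emph{twisted Baxter permutations} in $S_n$, i.e., permutations avoiding the vincular patterns $2\text{-}41\text{-}3$ and $3\text{-}41\text{-}2$, which are known to be equinumerous with Baxter permutations and hence counted by $B(n)$. The bridge is the bijection between noncrossing arc diagrams on $n$ points and permutations in $S_n$ that the paper develops via canonical join representations: each permutation $\pi$ corresponds to a diagram whose arcs encode the canonical joinands of $\pi$.

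Under this bijection, each arc of the diagram is associated with a descent $\pi(i)>\pi(i+1)$ of $\pi$, and has lower endpoint $\pi(i+1)$ and upper endpoint $\pi(i)$. The side on which the arc passes each intermediate point $v$ (i.e., $v$ with $\pi(i+1)<v<\pi(i)$) is determined by whether $v$ appears in the one-line notation of $\pi$ at a position before $i$ or after $i+1$. Consequently, an arc is neither a left arc nor a right arc precisely when its descent admits intermediate values on \emph{both} sides --- explicitly, when there exist indices $k<i$ and $l>i+1$ with $\pi(i+1)<\pi(k),\pi(l)<\pi(i)$. Splitting into the cases $\pi(k)<\pi(l)$ and $\pi(k)>\pi(l)$, these four values realize the vincular pattern $2\text{-}41\text{-}3$ or $3\text{-}41\text{-}2$, respectively.

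Thus diagrams on $n$ points using only left and right arcs correspond bijectively to twisted Baxter permutations of $S_n$, and the theorem follows from the known equality of twisted-Baxter and Baxter counts (established, for instance, in the author's work with Law on diagonal rectangulations). An equivalent reformulation is to identify the lattice congruence on the weak order of $S_n$ whose canonical joinands are exactly the left and right arcs; the quotient is then the lattice of diagonal rectangulations, which has $B(n)$ elements, and canonical join representations are in bijection with lattice elements.

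The principal obstacle is the second paragraph: verifying the side-of-passing rule that translates canonical joinands into arcs. This depends on the combinatorial description of canonical joinands in the weak order, which the paper develops in later sections; once that dictionary is in hand, the pattern-avoidance translation is immediate and the theorem reduces to the classical enumeration of Baxter objects.
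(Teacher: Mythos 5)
Your proposal is correct and follows essentially the same route as the paper: both translate the condition ``every arc is a left arc or a right arc'' through the bijection $\delta$ into the statement that every descent of the permutation has all its intermediate values on one side (equivalently, avoidance of $2\text{-}41\text{-}3$ and $3\text{-}41\text{-}2$), identify these as the twisted Baxter permutations, and invoke the known fact that these are equinumerous with Baxter permutations. The only cosmetic difference is that the paper packages the restriction of $\delta$ via the lattice-congruence machinery (left and right arcs form a subarc-closed set), whereas you verify the side-of-passing dictionary directly --- a step the paper's explicit description of $\delta$ fully supports.
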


Among several combinatorial objects already known to be counted by the Baxter number $B(n)$ are the \newword{diagonal rectangulations} with $n$ rectangles.  
(This is due to \cite{ABP,CGHK}.  See \cite[Section~1]{rectangle} and \cite[Remark~6.6]{rectangle} for details on attribution.)
Fixing a square and one diagonal of the square, diagonal rectangulations are (combinatorial types of) decompositions of the square into rectangles such that each rectangle's interior intersects the given diagonal.
A larger set of rectangulations are the \newword{generic rectangulations}.
(See~\cite{generic}.)
These are (combinatorial types of) decompositions of a square into rectangles such that no four rectangles have a common corner.
Just as the diagonal rectangulations are in bijection with a certain set of noncrossing arc diagrams by Theorem~\ref{diagrams left right}, the generic rectangulations are in bijection with a larger set of noncrossing arc diagrams.
An \newword{inflection} of an arc in a diagram is a pair of adjacent points with one left of the arc and the other right of the arc.
If we draw arcs in the natural way as in Figures~\ref{diagrams3} and~\ref{diagrams4}, the arc has an inflection point (in the sense of curvature) between the two points comprising the inflection.
%The left arcs and right arcs are exactly the arcs with no inflections, so t
The noncrossing arc diagrams of Theorem~\ref{diagrams left right} are diagrams whose arcs have no inflections.
A similarly nice thing happens if we allow up to one inflection in each arc.

\begin{theorem}\label{diagrams inflection}
The noncrossing arc diagrams on $n$ points with each arc having at most one inflection are in bijection with generic rectangulations with $n$ rectangles.
\end{theorem}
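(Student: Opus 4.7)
The plan is to mirror the strategy that underlies Theorem~\ref{diagrams left right}, applied to the coarser lattice congruence of the weak order on $S_n$ whose quotient is in bijection with generic rectangulations rather than diagonal rectangulations. The paper's central thesis is that noncrossing arc diagrams model canonical join representations in the weak order, and that for each lattice quotient $S_n/\Theta$ the canonical join representations of the quotient elements are modeled precisely by those noncrossing arc diagrams whose arcs correspond to join-irreducibles of $S_n$ not contracted by $\Theta$. Granting this general correspondence, the theorem reduces to identifying which arcs represent the join-irreducibles that survive in the quotient corresponding to generic rectangulations.

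First I would invoke \cite{generic} to fix the lattice congruence $\Theta_g$ whose equivalence classes on $S_n$ are the generic rectangulations on $n$ rectangles; this congruence refines the one whose classes are diagonal rectangulations, which implicitly underlies Theorem~\ref{diagrams left right}. Next I would use the bijection between join-irreducibles of the weak order on $S_n$ and arcs on $n$ points: a join-irreducible has a unique descent, and the descending pair together with the positions of the remaining entries encodes an arc's two endpoints and its left/right passage pattern. Under this bijection, arcs with $0$ inflections correspond to join-irreducibles whose non-descent entries lie entirely on one side, and the inflection count records how often the associated permutation's entries alternate between the ``before the descent, but smaller'' and ``after the descent, but larger'' regimes.

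The crux is to show that $\Theta_g$ contracts a join-irreducible if and only if the associated arc has at least two inflections. For one direction, I would exhibit, for each join-irreducible whose arc has $\ge 2$ inflections, a strictly smaller join-irreducible forced to be congruent to it under the defining relations of $\Theta_g$ in \cite{generic}; the local moves that distinguish $\Theta_g$ from the diagonal-rectangulation congruence should be exactly what permits a single inflection but collapses any additional one, so this step is a matter of reading off the right normal form. For the other direction, I would check that every arc with $\le 1$ inflection labels a join-irreducible not forced downward by $\Theta_g$, so it survives as a potential canonical joinand in $S_n/\Theta_g$.

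Finally I would verify that a collection of arcs with at most one inflection each forms a noncrossing arc diagram satisfying (A), (C1), (C2) if and only if the corresponding join-irreducibles jointly form a canonical join representation in $S_n/\Theta_g$. Compatibility in the quotient is a pairwise local condition on canonical joinands, and NAD-compatibility is a pairwise local condition on arcs, so once the single-arc correspondence is in hand this matching is essentially routine. The main obstacle is the crux step of the preceding paragraph: translating the pattern-theoretic or generator-based description of $\Theta_g$ from \cite{generic} into the purely geometric ``inflection count'' statement. Once that is in place, counting and the bijection with generic rectangulations follow automatically from the general lattice-quotient machinery.
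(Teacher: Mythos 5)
Your proposal follows essentially the same route as the paper: set up the lattice-quotient framework, use the arc/join-irreducible dictionary, identify the subarc-closed family of arcs with at most one inflection as the uncontracted join-irreducibles of a congruence $\Theta$, and match the resulting uncontracted permutations against the $2$-clumped permutations of \cite{generic}, citing \cite[Theorem~4.1]{generic} for their bijection with generic rectangulations. The paper's actual proof is nearly a one-liner because Theorem~\ref{quot diagram}, Theorem~\ref{arc forcing}, and Corollaries~\ref{closed subarcs} and~\ref{perm quot} already do all the heavy lifting; the only genuinely new content is the observation that arcs with $\le 1$ inflection are closed under subarcs and that the associated avoidance conditions from Corollary~\ref{perm quot} are exactly the definition of $2$-clumped in \cite{generic}. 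Your ``crux step'' --- showing $\Theta_g$ contracts precisely the join-irreducibles with $\ge 2$ inflections --- is phrased as if it needed fresh work, but it reduces immediately to this definitional matching; no new forcing argument is required once the general machinery is in place.
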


Noncrossing partitions, diagonal rectangulations, and generic rectangulations are all in bijection with certain pattern-avoiding permutations.
These connections and the evidence for small $n$ suggest the following theorem, which is somewhat surprising \emph{a priori}.
\begin{theorem}\label{diagrams perm}
There are $n!$ noncrossing arc diagrams on $n$ points.
\end{theorem}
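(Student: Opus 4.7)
The plan is to construct an explicit bijection between permutations in $S_n$ and noncrossing arc diagrams on $n$ points, mediated by canonical join representations in the weak order on $S_n$, as signalled by the paper's title. The first step is to set up a bijection between arcs on $n$ points and join-irreducible elements of the weak order on $S_n$. A join-irreducible permutation $\pi$ has exactly one descent, at some position $i$ with $\pi(i) > \pi(i+1)$; beyond these two values, $\pi$ is determined by which values strictly between $\pi(i+1)$ and $\pi(i)$ appear before position $i$ and which appear after position $i+1$, with values outside the interval $[\pi(i+1), \pi(i)]$ forced into their identity positions. This is exactly the data of an arc from $\pi(i+1)$ up to $\pi(i)$ together with a left/right assignment of the intermediate points.

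Next, I would define a forward map from a permutation $x = x_1 \cdots x_n$ to a diagram: for each descent position $i$, draw an arc from $x_{i+1}$ up to $x_i$, placing each intermediate value $k \in \{x_{i+1}+1, \ldots, x_i - 1\}$ on the left of the arc if $k$ appears before position $i$ in $x$, and on the right otherwise. Condition (C2) is automatic, since the values at the various descent positions are pairwise distinct (likewise for the values immediately below them). The main combinatorial verification is that the arcs produced are pairwise compatible in the sense introduced after (C2). This can be done by taking two descents at positions $i<j$ and using the linear structure of $x$'s one-line notation to rule out the simultaneous existence of points $p$ and $p'$ forcing opposite orientations of the two arcs.

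The main obstacle is the inverse direction: showing that every noncrossing arc diagram arises from a unique permutation. There are two natural routes. The direct route is an induction on $n$: remove the unique arc (if any) incident to the top point $n$, apply the inductive hypothesis to recover a permutation of $\{1,\ldots,n-1\}$, and then reinsert $n$ into the correct position as dictated by the removed arc's left/right structure (and into the top position when no such arc is present). The conceptual route is to invoke semidistributivity of the weak order, which guarantees that each element has a unique canonical join representation, and then to match the combinatorial arc-compatibility conditions with the algebraic condition characterizing canonical joinand sets. In either approach, the crux is to show that (C1) and (C2) correspond exactly to the requirement that a given collection of join-irreducibles is the canonical join representation of some element of $S_n$; counting then gives $n!$ noncrossing arc diagrams, one for each permutation.
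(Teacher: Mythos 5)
Your setup matches the paper's: the correspondence between arcs and join-irreducible permutations, the forward map $\delta$ sending a permutation to the arcs of its descents, and injectivity via uniqueness of canonical join representations are all correct and are exactly how the paper proceeds. The genuine gap is in the inverse direction, which is where essentially all of the work lies. Your ``direct route'' induction fails: deleting the point $n$ (and the arc ending at $n$) from a diagram does \emph{not} correspond to deleting the value $n$ from the permutation. Concretely, take $n=3$ and the diagram with a single arc from $1$ to $3$ passing to the \emph{left} of $2$; this is $\delta(231)$. Removing the arc and the point $3$ leaves the empty diagram on $\set{1,2}$, whose permutation is $12$. The descent $3\,1$ forces $3$ to be reinserted immediately before $1$, yielding $312$ --- but $\delta(312)$ is the arc passing to the \emph{right} of $2$. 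No single insertion of $3$ into $12$ produces $231$. The problem is that the reduced permutation already fixes the positions of all values in $(a,n)$ relative to $a$, while the arc's left/right data makes an independent demand, and the two need not agree. Your ``conceptual route'' correctly identifies the crux --- that (C1) and (C2) characterize canonical join representations --- but offers no argument for it; semidistributivity gives each permutation a canonical join representation, but says nothing about which antichains of join-irreducibles arise this way.

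For comparison, the paper's proof of surjectivity builds the permutation by a global greedy procedure rather than a one-point induction: it forms the graph on $\set{1,\dots,n}$ whose edges are the arcs, defines a ``left of'' relation on its connected components, proves this relation is acyclic (by showing a $k$-cycle would force a $(k-1)$-cycle, and that $1$- and $2$-cycles are impossible), and then repeatedly deletes the smallest ``left component,'' writing its labels in decreasing order. That acyclicity claim is the missing ingredient your proposal would need in some form; without it, or an equivalent argument, the count of $n!$ is not established.
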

We prove Theorems~\ref{diagrams left right}, \ref{diagrams inflection}, and~\ref{diagrams perm} and several other enumerative statements in Sections~\ref{proof sec} and~\ref{quot sec}.
Before doing that, we explain canonical join representations of permutations in Section~\ref{can sec}.
Canonical join representations are not really necessary for the proofs of Theorems~\ref{diagrams left right}, \ref{diagrams inflection}, and~\ref{diagrams perm}.
However, canonical join representations explain ``what's really going on'' in these theorems and in many of the other results.
Furthermore, noncrossing arc diagrams provide the answer to a very natural question about canonical join representations (Question~\ref{can q}).

Noncrossing arc diagrams are foreshadowed in the work of Bancroft~\cite{Bancroft} and Petersen~\cite{Petersen} on shard intersections in the Coxeter arrangement of type A.

\section{Canonical join representations of permutations}\label{can sec}
A \newword{join representation} for an element $x$ in a finite lattice $L$ is an identity $x=\Join S$, where $S$ is a subset of $L$.
The join representation $x=\Join S$ is \newword{irredundant} if there is no proper subset $S'\subset S$ such that $x=\Join S'$.
In particular, if $\Join S$ is irredundant, then $S$ is an antichain.
We define a relation $\ll$ on subsets of $L$ by setting $S\ll T$ if, for every $s\in S$, there exists a $t\in T$ with $s\le t$.
The relation $\ll$ restricts to a partial order on antichains (equivalent to containment order on order ideals generated by the antichains).
A join representation $x=\Join S$ is called the \newword{canonical join representation} of $x$ if it is irredundant and if every join representation $x=\Join T$ has $S\ll T$.
In other words, $S$ is the unique minimal antichain, in the partial order $\ll$, among antichains joining to $x$.
If $x=\Join S$ is  the canonical join representation of $x$, then the elements of $S$ are called the \newword{canonical joinands} of $x$.
We sometimes abuse terminology by referring to the set $S$ itself, rather than the expression $x=\Join S$, as the canonical join representation of~$x$.

If an element has a canonical join representation, then each canonical joinand is join-irredicible.   
That is, each canonical joinand $j$ covers exactly one element of $L$, or equivalently, there is no join representation for $j$ consisting of elements strictly below $j$.
The following proposition is immediate.
\begin{prop}\label{ji can}
An element $j$ of a finite lattice $L$ is join-irreducible if and only if its only canonicial joinand is $j$ itself.
\end{prop}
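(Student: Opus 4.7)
The plan is to prove both directions by directly unpacking the definition of canonical join representation, observing first that $\{j\}\ll T$ (together with $\Join T=j$) forces $j\in T$. Indeed, if $j = \Join T$, then every $t \in T$ satisfies $t\le j$, so a $t\in T$ with $j\le t$ must actually equal $j$. This single observation drives everything.

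For the forward direction, I would assume $j$ is join-irreducible and verify that $\Join\{j\}=j$ is the canonical join representation. It is an irredundant join representation since removing $j$ leaves the empty join. For any other join representation $j=\Join T$ (which we may take to be irredundant, hence an antichain), join-irreducibility of $j$ forbids $j$ from being the join of strictly smaller elements, so some $t\in T$ must equal $j$; equivalently $\{j\}\ll T$. Thus $\{j\}$ is the unique minimum in the $\ll$-order among antichains joining to $j$, which is the canonical join representation.

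For the backward direction, I would assume $\{j\}$ is the canonical join representation of $j$ and derive join-irreducibility. If $j$ were not join-irreducible, there would exist a join representation $j=\Join T$ with every element of $T$ strictly below $j$; passing to a minimal such $T$ gives an antichain. But then no element of $T$ is above $j$, so $\{j\}\not\ll T$, contradicting the minimality of $\{j\}$ in the $\ll$-order.

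There is no real obstacle here, since the proposition is essentially a repackaging of definitions (which is presumably why the authors call it immediate). The only thing to be careful about is existence: the statement implicitly asserts both that join-irreducibles do have canonical join representations and that the element $j$ in the backward direction has one; both are handled by explicitly exhibiting $\{j\}$ as that representation in the argument above.
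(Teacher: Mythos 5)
Your proof is correct and is exactly the definitional unpacking intended by the paper, which states the proposition without proof (calling it ``immediate''). Both directions, including the careful handling of existence of the canonical join representation, check out.
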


A lattice in which every element has a canonical join representation is called join-semidistributive.  If the dual condition (every element has a canonical meet representation) holds as well, then the lattice is called \newword{semidistributive}.
The usual definition of semidistributivity involves two dual weakenings of the distributive laws, but the present definition is equivalent by \cite[Theorem~2.24]{FreeLattices}.

Suppose $L$ is a finite join-semidistributive lattice.
Not every antichain of join-irreducible elements in $L$ is a canonical join representation.
We define the \newword{canonical join complex} of $L$ to be the abstract simplicial complex whose vertices are the join-irreducible elements of $L$ and whose faces are the sets $S$ such that $\Join S$ is a canonical join representation.
This is a simplicial complex in light of Proposition~\ref{ji can} and the following proposition.
\begin{prop}
Suppose $L$ is a finite lattice and let $S$ be a subset of $L$.
%If $S=\set{j}$ and $j$ is join-irreducible, then $\Join S$ is the canonical join reprentation for $j$.
If $\Join S$ is a canonical join representation and $S'\subseteq S$ then $\Join S'$ is also a canonical join representation.
\end{prop}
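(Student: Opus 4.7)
The plan is to verify the two defining properties of a canonical join representation for $\Join S'$: irredundancy, and minimality in the $\ll$ order among all join representations of $y := \Join S'$. Both will follow by leveraging the assumed canonicality of $\Join S$, combined with the identity $x = \Join S = \Join(S\setminus S')\vee \Join S'$, which lets us splice $S'$-data into $S$-data.

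First I would show that $\Join S'$ is irredundant. Since $S$ is an antichain, so is $S'$. If $\Join S'$ were redundant, there would be a proper subset $S''\subsetneq S'$ with $\Join S'' = \Join S'$. Then
\[
x = \Join S = \Join(S\setminus S')\vee \Join S' = \Join(S\setminus S')\vee \Join S'' = \Join\bigl((S\setminus S')\cup S''\bigr),
\]
exhibiting $x$ as the join of a proper subset of $S$, contradicting the irredundancy of $\Join S$.

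Next, given any join representation $y = \Join T$, I would prove $S'\ll T$. Writing
\[
x = \Join(S\setminus S')\vee \Join T = \Join\bigl((S\setminus S')\cup T\bigr)
\]
gives a join representation of $x$, so canonicality of $\Join S$ yields $S\ll (S\setminus S')\cup T$. Fix $s\in S'$. By $\ll$ there is some $u\in (S\setminus S')\cup T$ with $s\le u$. If $u\in S\setminus S'$, then $s$ and $u$ are distinct elements of the antichain $S$ with $s\le u$, a contradiction. Hence $u\in T$, establishing $S'\ll T$.

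The argument is essentially bookkeeping, and no single step is truly an obstacle; the only subtle point is recognizing that $S\setminus S'$ plays the role of a ``shared remainder'' that can be grafted onto either side of the comparison. Care is needed to confirm that both resulting substitutions yield proper subsets (for irredundancy) and honest join representations (for minimality), but both checks are immediate from the fact that $S$ is an antichain and that $\Join$ is associative.
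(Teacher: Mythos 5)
Your proof is correct and uses essentially the same argument as the paper: augment a join representation $T$ of $\Join S'$ by $S\setminus S'$ to get a join representation of $\Join S$, apply canonicality of $S$, and use the antichain property of $S$ to conclude the witnessing element lies in $T$. The only difference is that you also verify irredundancy explicitly, which the paper leaves implicit (it follows from the minimality property together with $S'$ being an antichain).
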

\begin{proof}
Let $x$ be the element $\Join S$, let $x'$ be the element $\Join S'$, and write $C$ for $S\setminus S'$.
Suppose $\Join T'$ is a join representation for $x'$ and let $s\in S'$.
We need to show that there exists $t\in T'$ with $s\le t$.
Writing $T=T'\cup C$, the expression $\Join T$ is a join representation for $x$.
Since $\Join S$ is the canonical join representation for $x$, there exists $t\in T$ such that $s\le t$.
Since $S$ is an antichain and $s\le t$, we see that $t\not\in C$.
Thus $t\in T'$ as desired.
\end{proof}

A permutation $x$ of $\set{1,\ldots,n}$ is a sequence $x_1x_2\cdots x_n$ such that $\set{x_1,\ldots,x_n}=\set{1,\ldots,n}$.
The weak order on permutations is a partial order whose cover relations are $x_1\cdots x_n\lessdot y_1\cdots y_n$ whenever there exists $i$ such that $x_i=y_{i+1}<y_i=x_{i+1}$ and such that $x_j=y_j$ for $j\not\in\set{i,i+1}$.
%The cover relations in the weak order are the relations $x\lessdot y$ such that $x$ and $y$ differ only in swapping the entries $x_i$ and $x_{i+1}$ for some $i$, and such that $x_i<x_{i+1}$.
We write $S_n$ for the set of permutations of $\set{1,\ldots,n}$ partially ordered under the weak order.
Figure~\ref{weak ex} shows this partial order for $n=3$ and for $n=4$.
\begin{figure}
\scalebox{0.8}{\includegraphics{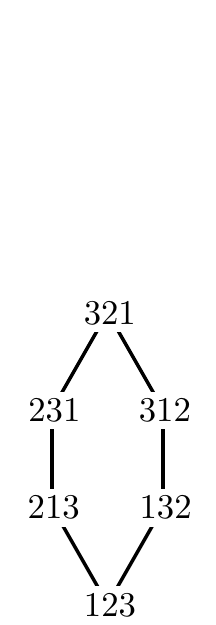}}\qquad\qquad\scalebox{0.8}{\includegraphics{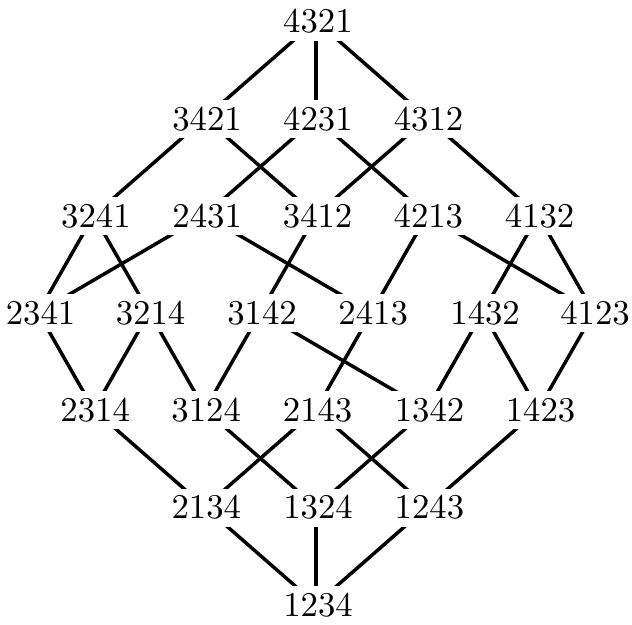}}
\caption{The weak order on $S_n$ for $n=3$ and for $n=4$}
\label{weak ex}
\end{figure}

An \newword{inversion} in $x_1\cdots x_n$ is a pair $(x_i,x_j)$ with $i<j$ and $x_i>x_j$.
For example, the inversions of $25314$ are $(2,1)$, $(3,1)$, $(5,1)$, $(5,3)$ and $(5,4)$.
Write $\inv(x)$ for the \newword{inversion set} (the set of inversions) of $x$.
The weak order on permutations is characterized by containment of inversion sets.
That is, $x\le y$ in the weak order if and only if $\inv(x)\subseteq\inv(y)$.

The weak order on permutations is a lattice, and Duquenne and Cherfouh showed that it is semidistributive \cite[Theorem~3]{DuCh}.
(More generally, the weak order on any finite Coxeter group is semidistributive \cite[Lemme~9]{Poly-Barbut}.)
In Theorem~\ref{perm can join}, below, we describe canonical join representations of permutations explicitly, in particular giving an independent proof that the weak order on permutations is semidistributive.
The argument in \cite{DuCh} is very different, but a proof that is similar in spirit to the proof here can be obtained by combining \cite[Proposition~6.4]{con_app} with special cases of \cite[Lemma~3.5]{shardint} and \cite[Theorem~3.6]{shardint},

A \newword{descent} in a permutation is a pair $x_i$, $x_{i+1}$ of adjacent entries such that $x_i>x_{i+1}$.
Given a permutation $x_1\cdots x_n$ and a descent $x_i>x_{i+1}$, define $\lambda(x,i)$ to be the permutation given by $1,\ldots,(x_{i+1}-1)$, then the values $\set{x_j:j<i,\,x_{i+1}<x_j<x_i}$ in increasing order, then $x_i$, then $x_{i+1}$, then $\set{x_j:i+1<j,\,x_{i+1}<x_j<x_i}$ in increasing order, and finally the values $(x_i+1),\ldots,n$.
Thus $\lambda(x,i)$ is join-irreducible and covers the permutation obtained by swapping the adjacent entries $x_i$ and $x_{i+1}$ in $\lambda(x,i)$.
Examples of the construction of $\lambda(x,i)$ occur below as part of Examples~\ref{can ex 1} and~\ref{can ex 2}.

\begin{prop}\label{min below with inv}
The permutation $\lambda(x,i)$ is the unique minimal element of the set $\set{y\le x:(x_i,x_{i+1})\in\inv(y)}$.
\end{prop}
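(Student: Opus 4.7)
The plan is to compute $\inv(\lambda(x,i))$ explicitly and then apply the standard characterization $y\le x$ iff $\inv(y)\subseteq\inv(x)$. Set $V=\set{x_j:j<i,\,x_{i+1}<x_j<x_i}$ and $W=\set{x_j:j>i+1,\,x_{i+1}<x_j<x_i}$. Since $\lambda(x,i)$ has a single descent, at the position where $x_i$ is followed by $x_{i+1}$, its inversions are exactly the pairs whose larger entry lies in the initial increasing run $\set{1,\ldots,x_{i+1}-1}\cup V\cup\set{x_i}$ and whose smaller entry lies in the terminal increasing run $\set{x_{i+1}}\cup W\cup\set{x_i+1,\ldots,n}$. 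Tabulating which such pairs actually decrease yields four families: \textup{(A)} $(x_i,x_{i+1})$; \textup{(B)} $(x_i,x_k)$ for $x_k\in W$; \textup{(C)} $(x_j,x_{i+1})$ for $x_j\in V$; and \textup{(D)} $(x_j,x_k)$ for $x_j\in V$, $x_k\in W$ with $x_j>x_k$. A quick check confirms each is also an inversion of $x$, so $\lambda(x,i)\le x$, and (A) puts $\lambda(x,i)$ in the set $\set{y\le x:(x_i,x_{i+1})\in\inv(y)}$.

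For minimality, I fix any $y\le x$ with $(x_i,x_{i+1})\in\inv(y)$ and show each of (A)--(D) lies in $\inv(y)$, which forces $\lambda(x,i)\le y$. Each case reduces to a short ``three-term chain'' in the relation ``appears before'' in $y$, combined with the constraint $\inv(y)\subseteq\inv(x)$. For (B): if $(x_i,x_k)\notin\inv(y)$ then $x_k$ precedes $x_i$ in $y$; chaining with $x_i$ before $x_{i+1}$ would give $(x_k,x_{i+1})\in\inv(y)$, impossible since $k>i+1$ places $(x_k,x_{i+1})$ outside $\inv(x)$. Case (C) is the mirror: from $x_{i+1}$ before $x_j$ and $x_i$ before $x_{i+1}$ one would force the forbidden $(x_i,x_j)$ into $\inv(y)\setminus\inv(x)$. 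Case (D) is then immediate: if $x_k$ preceded $x_j$ in $y$, chain with the inversion from (B) or (C) already established to derive a forbidden inversion (for instance, $x_i$ before $x_k$ before $x_j$ yields $(x_i,x_j)$, which is not in $\inv(x)$ since $j<i$).

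The argument is elementary once (A)--(D) are tabulated correctly. The only real pitfall I anticipate is overlooking the ``cross'' inversions of type (D) between $V$ and $W$, which involve neither $x_i$ nor $x_{i+1}$ directly but arise because $\lambda(x,i)$ places all of $V$ before $x_i$ and all of $W$ after $x_{i+1}$; missing them would give a proper subset of $\inv(\lambda(x,i))$ and break the minimality step. Beyond that, the proof is a uniform transitivity-plus-contradiction exercise.
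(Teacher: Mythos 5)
Your argument is correct and takes essentially the same approach as the paper's: compute $\inv(\lambda(x,i))$ explicitly, verify it lies inside $\inv(x)$, and then use transitivity of the ``precedes in $y$'' relation together with $\inv(y)\subseteq\inv(x)$ to show every inversion of $\lambda(x,i)$ must lie in $\inv(y)$. The only cosmetic difference is that you split the minimality step into your families (B)--(D), whereas the paper handles an arbitrary inversion $(b,a)$ of $\lambda(x,i)$ in one uniform sweep by noting that $b,a,x_i,x_{i+1}$ occur in the order $\cdots b\cdots x_ix_{i+1}\cdots a\cdots$ in $x$ and deriving that one of $(a,x_{i+1})$, $(x_i,b)$ would be a forbidden inversion.
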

\begin{proof}
The inversion set of $\lambda(x,i)$ consists of all pairs $(b,a)$ with ${x_i\ge b>a\ge x_{i+1}}$, with $b\in\set{x_j:1\le j\le i}$, and with $a\in\set{x_j:i+1\le j\le n}$.
Each of these is an inversion of $x$, so $\lambda(x,i)\le x$.
Given a permutation $y$ with $y\le x$ and $(x_i,x_{i+1})\in\inv(y)$ and such a pair $(b,a)$, we will show that $(b,a)$ is an inversion of $y$.
The elements $b$, $a$, $x_i$ and $x_{i+1}$ occur in the order $\cdots b\cdots x_ix_{i+1}\cdots a\cdots$ in $x$.
Also $(x_i,x_{i+1})\in\inv(y)$, or in other words $x_i$ precedes $x_{i+1}$ in $y$.
If $(b,a)\not\in\inv(y)$, or in other words if $a$ precedes $b$ in $y$, then necessarily either $a$ precedes $x_{i+1}$ or $b$ follows $x_i$ in $y$, or both.
Thus either $(a,x_{i+1})$ or $(x_i,b)$ is in $\inv(y)$, contradicting the fact that $y\le x$.
We have shown that $\inv(\lambda(x,i))\subseteq y$, and thus ${\lambda(x,i)\le y}$.
\end{proof}

The next theorem says in particular that the canonical joinands of a permutation are in bijection with the descents of the permutation.

\begin{theorem}\label{perm can join}
The canonical join representation of a permutation $x$ is 
\[x=\Join\set{\lambda(x,i):x_i>x_{i+1}}.\]
\end{theorem}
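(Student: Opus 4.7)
My plan is to verify the three defining conditions of the canonical join representation for the set $S = \set{\lambda(x,i) : x_i > x_{i+1}}$: that $\Join S = x$, that this join is irredundant, and that $S \ll T$ for every join representation $x = \Join T$. The engine throughout is Proposition~\ref{min below with inv}, together with one preliminary observation. From the description of cover relations of the weak order recalled earlier in this section, the lower covers of $x$ are in bijection with its descents: for each descent position $i$, swapping $x_i$ and $x_{i+1}$ produces a lower cover $y_i$ with $\inv(y_i) = \inv(x) \setminus \set{(x_i,x_{i+1})}$. Consequently, for any $z \le x$, we have $z \not\le y_i$ if and only if $(x_i,x_{i+1}) \in \inv(z)$.

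With this setup, $\Join S = x$ unpacks in a few lines. Each $\lambda(x,i) \le x$ by Proposition~\ref{min below with inv}, so $\Join S \le x$. If the join were strictly less than $x$, it would lie below some lower cover $y_i$ of $x$ (in any finite lattice, an element strictly below $x$ is below some lower cover of $x$), and then $\lambda(x,i) \le y_i$ would contradict $(x_i,x_{i+1}) \in \inv(\lambda(x,i))$.

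For irredundance I would prove the sharper claim that $\lambda(x,i) \le y_j$ whenever $i$ and $j$ are distinct descent positions. The explicit description of $\inv(\lambda(x,i))$ extracted in the proof of Proposition~\ref{min below with inv} consists of pairs $(b,a)$ with $b = x_\ell$ for some $\ell \le i$ and $a = x_m$ for some $m \ge i+1$. For the cover inversion $(x_j,x_{j+1})$ to appear in $\inv(\lambda(x,i))$ one therefore needs $j \le i$ and $j+1 \ge i+1$, forcing $j = i$. Hence for $i \ne j$, $(x_j,x_{j+1}) \notin \inv(\lambda(x,i))$, so $\inv(\lambda(x,i)) \subseteq \inv(y_j)$, i.e., $\lambda(x,i) \le y_j$. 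This gives $\Join(S \setminus \set{\lambda(x,j)}) \le y_j < x$, showing that $\Join S$ is irredundant.

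Finally, for the $\ll$-minimality condition, let $T$ satisfy $\Join T = x$ and fix a descent position $i$. Some $t \in T$ must fail to lie below $y_i$---otherwise $\Join T \le y_i < x$. This $t$ satisfies $t \le x$ and $(x_i,x_{i+1}) \in \inv(t)$, so the minimality clause of Proposition~\ref{min below with inv} yields $\lambda(x,i) \le t$, as needed. I expect the only step requiring any care is the preliminary lower-cover observation; once that is in place, Proposition~\ref{min below with inv} does essentially all of the remaining work.
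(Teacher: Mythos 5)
Your proof is correct and follows essentially the same route as the paper's: Proposition~\ref{min below with inv} plus the explicit inversion set of $\lambda(x,i)$ drive all three verifications, and your $\ll$-minimality argument is the paper's argument stated contrapositively. The only cosmetic difference is that you establish irredundance directly via the sharper claim $\lambda(x,i)\le y_j$ for $i\ne j$, whereas the paper uses the identical inversion computation to show $\set{\lambda(x,i)}$ is an antichain (which, together with $\ll$-minimality, yields irredundance).
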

\begin{proof}
We first show that $\Join\set{\lambda(x,i):x_i>x_{i+1}}=x$.
Proposition~\ref{min below with inv} implies in particular that $\lambda(x,i)\le x$ for each $i$ with $x_i>x_{i+1}$, so ${\Join\set{\lambda(x,i):x_i>x_{i+1}}\le x}$.
If $\Join\set{\lambda(x,i):x_i>x_{i+1}}<x$, then some $y\lessdot x$ has $y\ge\Join\set{\lambda(x,i):x_i>x_{i+1}}$.
The permutation $y$ is obtained from $x$ by swapping $x_i$ and $x_{i+1}$ for some $i$ such that $x_i>x_{i+1}$.
But then $(x_i,x_{i+1})$ is not an inversion of $y$, while it is an inversion of $\lambda(x,i)$, and therefore $y\not\ge\lambda(x,i)$, and this is a contradiction.
We conclude that $\Join\set{\lambda(x,i):x_i>x_{i+1}}=x$.

We next show that $\set{\lambda(x,i):x_i>x_{i+1}}$ is an antichain.
Suppose $i$ and $j$ are distinct indices with $x_i>x_{i+1}$ and $x_j>x_{j+1}$.
The inversion set of $\lambda(x,i)$ is described in the proof of Proposition~\ref{min below with inv}.
In particular, if $(x_j,x_{j+1})$ is an inversion of $\lambda(x,i)$, then $j\le i$ and $i+1\le j+1$, which is impossible because $i\neq j$.
Therefore $(x_j,x_{j+1})$ is not an inversion of $\lambda(x,i)$.
Since $(x_j,x_{j+1})$ \emph{is} an inversion of $\lambda(x,j)$, we see that $\lambda(x,j)\not\le\lambda(x,i)$.
Thus $\set{\lambda(x,i):x_i>x_{i+1}}$ is an antichain.

Finally, we show that $\set{\lambda(x,i):x_i>x_{i+1}}\ll T$ whenever $x=\Join T$.
Suppose to the contrary that for some $i$ with $x_i>x_{i+1}$, there is no element $t\in T$ with $\lambda(x,i)\le t$.
Then Proposition~\ref{min below with inv} implies that there is no element $t\in T$ with $(x_i,x_{i-1})\in\inv(t)$.
Let $y$ be the permutation obtained from $x$ by swapping $x_i$ and $x_{i+1}$.
Then $\inv(y)=\inv(x)\setminus\set{(x_i,x_{i+1})}$.
But since the weak order is containment of inversion sets and since $x$ is an upper bound for $T$, we see that $y$ is also an upper bound for $T$, contradicting the supposition that $x=\Join T$.
\end{proof}

\begin{example}\label{can ex 1}
In Figure~\ref{weak ex}, we see directly that the canonical joinands of the permutation $3421$ are $2134$ and $1342$, in agreement with Theorem~\ref{perm can join}.
\end{example}

\begin{example}\label{can ex 2}
Let $x$ be the permutation $157842936\in S_9$.
Then $x$ has descents $8>4$ and $4>2$ and $9>3$.
%The descents of $x$ are 
%\[x_4=8>4=x_5 \qquad x_5=4>2=x_6 \qquad x_7=9>3=x_8.\]
The canonical joinands of $x$ are $\lambda(x,4)=123578469$, $\lambda(x,5)=142356789$ and $\lambda(x,7)=124578936$.
\end{example}

Theorem~\ref{perm can join} describes the join representation of a given permutation, but does not answer the following natural question.
\begin{question}\label{can q}
Which sets of join-irreducible permutations are canonical join representations?
\end{question}
In the next section, we answer Question~\ref{can q} using noncrossing arc diagrams.

\section{Canonical join representations and noncrossing arc diagrams}\label{proof sec}
In this section, we give a bijection between permutations and noncrossing arc diagrams by showing that noncrossing arc diagrams are a combinatorial model for canonical join representations.

A permutation is join-irreducible if and only if it has exactly one descent.
Suppose $x_1\cdots x_n$ is a join-irreducible permutation with descent $x_i>x_{i+1}$.
The unique permutation covered by $x$ is obtained by swapping the adjacent entries $x_i$ and $x_{i+1}$.
In particular, $x$ itself is the unique minimal element of $\set{y\le x:(x_i,x_{i+1})\in\inv(y)}$, so Proposition~\ref{min below with inv} says that $x=\lambda(x,i)$.
Thus, given that $x$ is join-irreducible, it is determined uniquely by the values $x_i$ and $x_{i+1}$ forming its unique descent and by the set of values $\set{x_j:j<i,\,x_{i+1}<x_j<x_i}$.
(To specify $x$, we don't need to specify $i$ explicitly; the two values and the set are enough.)

On the other hand, an arc satisfying (A) is determined by its endpoints and by the set of points to its left.
Thus there is a bijection between join-irreducible permutations and arcs satisfying (A).
The bijection sends a join-irreducible permutation $x$ to the arc connecting $a$ and $b$, where $b>a$ is the unique descent of $x$, and for each $c$ with $a<c<b$, having $c$ to the left of the arc if and only if $c$ occurs in $x$ to the left of the descent $ba$.
We extend this bijection to a map from permutations to sets of arcs.
Specifically, let $\delta$ be the map taking a permutation to the set of arcs corresponding to the elements of its canonical join representation. 
The following theorem constitutes a bijective proof of Theorem~\ref{diagrams perm}.

\begin{theorem}\label{main}
The map $\delta$ is a bijection from $S_n$ to the set of noncrossing arc diagrams on $n$ points.
\end{theorem}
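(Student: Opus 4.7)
The plan is to establish the bijection by verifying three properties: that $\delta(x)$ is a noncrossing arc diagram, that $\delta$ is injective, and that $\delta$ is surjective via an explicit inverse.

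For the first property, condition (C2) is immediate because distinct descent positions $i\ne j$ of $x$ yield distinct values $x_i\ne x_j$ and $x_{i+1}\ne x_{j+1}$, so the corresponding arcs have distinct upper endpoints and distinct lower endpoints. For (C1), I would first establish pairwise compatibility using the combinatorial criterion from Section~\ref{diagrams sec}: the arc corresponding to $\lambda(x,i)$ has endpoints $x_{i+1}$ and $x_i$, and each intermediate value $c$ with $x_{i+1}<c<x_i$ lies on the left precisely when $c$ appears before position $i$ in $x$. For two descents at positions $i<j$, I would case-split on the relative positions of the vertical intervals $[x_{i+1},x_i]$ and $[x_{j+1},x_j]$ (disjoint, nested, or linked), and check that no intermediate value simultaneously forces each arc right of the other. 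Once pairwise compatibility is in hand, I would argue realizability by induction, drawing arcs innermost-first (by containment of vertical spans) so that each new arc is placed in a free strip consistent with its left/right data. This simultaneously discharges the promise from Section~\ref{diagrams sec} that pairwise compatibility suffices to realize the arcs as a noncrossing diagram.

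Injectivity is then immediate: by Theorem~\ref{perm can join} and the bijection between join-irreducibles and arcs satisfying (A), $\delta(x)$ determines the canonical join representation of $x$, and $x=\Join\delta(x)$ recovers $x$.

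For surjectivity, I would construct an inverse $\phi$. Given a noncrossing arc diagram $D$, let $S_D$ denote the set of join-irreducible permutations corresponding to the arcs of $D$, and set $\phi(D)=\Join S_D$. The task is to verify $\delta(\phi(D))=D$, which amounts to showing that $S_D$ is actually the canonical join representation of $\phi(D)$, not merely some join representation. I would proceed by induction on $n$ (or on $|D|$): remove a suitable outermost arc, for instance one whose vertical interval is maximal or one involving the extreme point $n$; apply the inductive hypothesis to the reduced diagram to identify $\phi$ of it; and track how reinserting the removed arc's join-irreducible modifies the inversion set to yield $\phi(D)$ with the predicted descent structure. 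The main obstacle is controlling $\inv(\Join S_D)$ precisely enough to read off the descents of $\phi(D)$ and to verify that each canonical joinand matches its corresponding arc under the bijection of Section~\ref{proof sec}. Proposition~\ref{min below with inv} is the key tool here, since it characterizes each $\lambda(x,i)$ by the single inversion pair $(x_i,x_{i+1})$ it forces into any element above it, which lets the join structure be tracked combinatorially through the inductive step and keeps spurious descents—forbidden by the noncrossing condition on $D$—from appearing in $\phi(D)$.
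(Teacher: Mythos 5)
Your outline has the right skeleton (well-definedness, injectivity, surjectivity), and the injectivity step is correct and matches the paper. But the two genuinely hard steps are only gestured at, and in both cases the method you propose runs into real trouble. First, realizability: you want to draw pairwise compatible arcs ``innermost-first (by containment of vertical spans).'' The vertical spans of compatible arcs need not be nested or disjoint---they can overlap (e.g.\ an arc from $1$ to $3$ and an arc from $2$ to $4$)---so this order is not well-defined, and ``placing each new arc in a free strip'' is exactly the assertion in need of proof. The true difficulty is a global one: condition (C2) forces arcs sharing endpoints into chains, and the pairwise left/right constraints propagate along these chains, so one must show that the induced ``is left of'' relation on whole chains has no directed cycles. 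The paper proves this acyclicity by showing $1$- and $2$-cycles are impossible and that any $k$-cycle ($k\ge 3$) can be shortened to a $(k-1)$-cycle; nothing in your sketch substitutes for this argument. (Note also that for the narrower task of showing $\delta(x)$ itself is a noncrossing diagram, the paper avoids all of this by plotting $x_i$ at $(i,x_i)$, joining descents by segments, and deforming onto a vertical line.)

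Second, surjectivity: you set $\phi(D)=\Join S_D$ and must show $S_D$ is the \emph{canonical} join representation of $\phi(D)$, i.e.\ that the descents of $\Join S_D$ are exactly the pairs of endpoints of arcs of $D$ with the correct left/right data. You correctly identify that controlling $\inv(\Join S_D)$ is the obstacle, but the join in the weak order is not the union of inversion sets (it is the smallest inversion set of a permutation containing that union), and Proposition~\ref{min below with inv} only tells you which single inversion each $\lambda$ forces into any upper bound---it does not by itself rule out spurious descents or pin down adjacency of the descent pair in the join. The ``remove an outermost arc and track the inversion set'' induction is left entirely unexecuted at precisely this point. The paper takes a different and more effective route: it constructs the preimage permutation directly by an algorithm on the diagram (repeatedly delete the smallest-valued ``left component'' of the graph of arcs and record its labels in decreasing order), verifies combinatorially that the resulting permutation's descents and left/right data reproduce the given arcs, and then invokes Theorem~\ref{perm can join}. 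This one construction simultaneously proves surjectivity and Proposition~\ref{pairwise works}, so realizability never has to be argued separately. As written, your proposal would need both of these missing arguments supplied before it constitutes a proof.
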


As a first step in proving the theorem, we recast the map $\delta$ in way that makes it clear that it maps permutations to noncrossing arc diagrams.
Given a permutation $x=x_1,\ldots,x_n$, write each entry $x_i$ at the point $(i,x_i)$ in the plane.
For each descent $x_i>x_{i+1}$, draw a line from $x_i$ to $x_{i+1}$ as illustrated in the left picture of Figure~\ref{map to ncs} for the permutation $157842936$.
(Cf. Example~\ref{can ex 2}.)
\begin{figure}
\scalebox{0.9}{\includegraphics{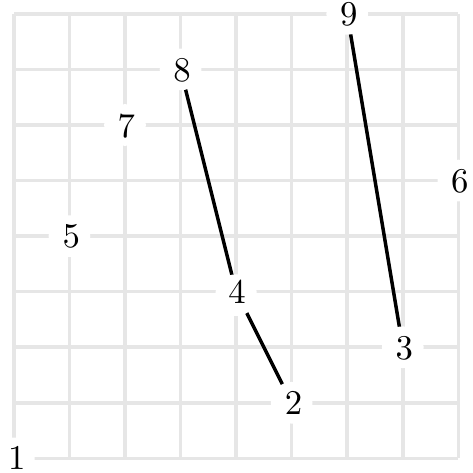}}\qquad\qquad\quad\scalebox{0.9}{\includegraphics{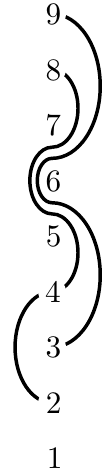}}
\caption{The map from permutations to noncrossing arc diagrams}
\label{map to ncs}
\end{figure}
Then move all of the numbers into a single vertical line, allowing the lines connecting descents to curve but not to pass through any of the numbers.
These lines become the arcs in a noncrossing arc diagram.
By Theorem~\ref{perm can join}, we see that these arcs correspond to the elements of the canonical join representation of~$x$, so the noncrossing arc diagram obtained is exactly~$\delta(x)$.

It will be convenient to prove Theorem~\ref{main} together with the following result, which was promised in the introduction.

\begin{prop}\label{pairwise works}
Given any collection of pairwise compatible arcs, there is a noncrossing arc diagram whose arcs are combinatorially equivalent to the given arcs.
\end{prop}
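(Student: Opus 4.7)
The plan is to prove Proposition~\ref{pairwise works} jointly with Theorem~\ref{main} by showing that every pairwise compatible collection $A$ of arcs on $n$ points is the image $\delta(x)$ of a unique permutation $x\in S_n$. Since $\delta(x)$ is, by its geometric construction, a noncrossing arc diagram whose arcs are combinatorially equivalent to those of $A$, this yields the proposition at the same time.

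Two preliminaries would come first. Well-definedness of $\delta$ follows from the construction pictured in Figure~\ref{map to ncs}: straight-line segments joining consecutive descending entries $(i,x_i)$ and $(i+1,x_{i+1})$ lie in disjoint vertical strips for non-adjacent descents and share only the point $(i+1,x_{i+1})$ for adjacent descents $x_i>x_{i+1}>x_{i+2}$, so deforming onto a single vertical line yields arcs satisfying (A), (C1), and (C2). Injectivity of $\delta$ holds because each arc in $\delta(x)$ encodes the endpoints and left set of the corresponding canonical joinand $\lambda(x,i)$, and by Theorem~\ref{perm can join} the list of such joinands determines $x$.

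For the main construction, given pairwise compatible arcs $A$, let $\lambda_\alpha$ denote the join-irreducible permutation whose descent pair and set of preceding in-between values match the endpoints and left set of $\alpha$ (this exists by the bijection between join-irreducibles and arcs recorded at the start of Section~\ref{proof sec}). I would set $x:=\bigvee_{\alpha\in A}\lambda_\alpha$ in the weak order and verify that the descents of $x$ are in bijection with $A$, with each descent of $x$ having the same endpoints and left set as its corresponding arc. Theorem~\ref{perm can join} would then give $\lambda(x,i)=\lambda_\alpha$ at the descent of $x$ corresponding to $\alpha$, so the canonical join representation of $x$ is exactly $\{\lambda_\alpha:\alpha\in A\}$ and $\delta(x)=A$.

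The main obstacle is this verification. The inversion set $\inv(x)$ contains $\bigcup_{\alpha\in A}\inv(\lambda_\alpha)$ (with each $\inv(\lambda_\alpha)$ described explicitly in the proof of Proposition~\ref{min below with inv}), together with additional pairs forced by the biclosedness condition that characterizes permutation inversion sets. The difficulty is to show that pairwise compatibility is precisely the condition that prevents unwanted extra descents in $x$ and keeps each value strictly between the endpoints of a prospective descent on the correct side. The bad scenarios---an extra descent of $x$ not corresponding to any arc, or a value forced to the wrong side by interference from another arc---would translate into a forcing conflict between two arcs $\alpha,\beta\in A$: namely, a point forcing $\alpha$ right of $\beta$ together with another point forcing $\beta$ right of $\alpha$, contradicting pairwise compatibility. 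Carrying out this case analysis, and managing the interaction of the $\inv(\lambda_\alpha)$ sets under biclosedness, is the technical heart of the argument.
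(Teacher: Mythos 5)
Your approach is genuinely different from the paper's. The paper constructs the preimage permutation \emph{combinatorially}: it views the arcs as a graph $\G$ on the $n$ points, defines a ``left of'' relation among connected components of $\G$, proves that relation is acyclic by a careful reduction of $k$-cycles to $(k-1)$-cycles (with the base cases $k=1,2$ handled directly), and then recursively peels off the lowest-valued left component, writing its labels in decreasing order. The resulting permutation $x$ is verified to have exactly the right descents with each intermediate value on the correct side, so $\delta(x)=\E$. You instead propose to take $x:=\bigvee_{\alpha}\lambda_\alpha$ directly in the weak order and then argue, via inversion sets and biclosedness, that the descents of $x$ correspond exactly to the given arcs.

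The problem is that your argument stops precisely where the work begins. You correctly identify that the technical core is to show the join $x$ has no extra descents and has each in-between value on the right side, and that this should follow from pairwise compatibility, but you do not carry this out; you concede ``Carrying out this case analysis, and managing the interaction of the $\inv(\lambda_\alpha)$ sets under biclosedness, is the technical heart of the argument.'' That heart is exactly what Proposition~\ref{pairwise works} asks for, and it is nontrivial: the obstruction to the join having the desired descent structure is not obviously a \emph{pairwise} phenomenon. In the paper this reduction to pairs is the content of the acyclicity claim, whose proof requires showing that any $k$-cycle in the ``left of'' relation forces a $(k-1)$-cycle; your assertion that ``the bad scenarios... would translate into a forcing conflict between two arcs $\alpha,\beta$'' asserts the conclusion of that reduction without supplying it. So as written, this is an outline of a plausible alternative strategy, not a proof; the key verification is a genuine gap.
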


\begin{proof}[Proof of Theorem~\ref{main} and Proposition~\ref{pairwise works}]
Since a permutation is uniquely determined by its canonical join representation, the map $\delta$ is injective.

Let $\E$ be some collection of pairwise compatible arcs, each satisfying condition (A) of the definition of noncrossing arc diagrams.
We do not yet know that we can draw all the arcs of $\E$ together in such a way that condition (C1) holds.
However, we know that (A) holds for each arc in $\E$ and that (C1) and (C2) hold for each pair.
Consider the graph $\G$ defined on the given $n$ points with edges given by $\E$.
By (C2), each connected component of $\G$ is either an isolated point or a sequence $i_1>\cdots>i_k$ such that each $i_j$ and $i_{j+1}$ are connected by an arc in $\E$.

In Section~\ref{diagrams sec}, in connection with the definition of compatibility of arcs, we described combinatorial conditions that would require one arc to be drawn to the left or right of another.
Say a component $C_1$ of $\G$ is \newword{left of} another component $C_2$ (or equivalently that $C_2$ is \newword{right of} $C_1$) if there exist arcs $\alpha_1$ in $C_1$ and $\alpha_2$ in $C_2$ such that $\alpha_1$ must be drawn to the left of $\alpha_2$.
We claim that the relation ``is left of'' is acyclic on components of $\G$.
There are no 1-cycles in the relation.
A 2-cycle in the relation is a pair $C_1,C_2$ of components such that $C_1$ is both right and left of $C_2$.
If $C_2$ is left of $C_1$, then there exists a point $p$ that is left of (or an endpoint of) an arc in $C_1$ and is right of (or an endpoint of) an arc in $C_2$, with $p$ not an endpoint of both arcs.
At $p$, there is an arc $\alpha_1$ of $C_1$ that must be right of some arc $\alpha_2$ in $C_2$.
As one moves vertically upwards from $p$, there may be endpoints of arcs in $C_1$ or $C_2$.
As we pass through such an endpoint (say in $C_1$) and pass to a new arc in $C_1$, condition (C2) says that we don't also pass through an endpoint of $C_2$.  
We pass to a new arc $\alpha_1'$ in $C_1$ which also must be right of $\alpha_2$.  
Thus, continuing upwards and passing endpoints of arcs in $C_1$ or $C_2$, we find that we must continue to draw $C_1$ right of $C_2$.
After also making the same argument moving downward from $p$, we see that \emph{all} of $C_1$ can be drawn to the right of $C_2$.
In particular, $C_1$ is not left of $C_2$, so $C_1,C_2$ is not a 2-cycle. 

Now suppose that $k\ge 3$ and $C_1,\ldots,C_k$ are components of $\G$ with the property that $C_{i+1}$ is left of $C_i$ for each $i=1,\ldots k$.
We will index cyclically throughout the argument, so that in particular $C_1$ is left of $C_k$.
Each $C_i$ has some lowest vertex $a_i$ and some highest vertex $b_i$.
First, suppose the interval $[a_i,b_i]$ is contained in the interval $[a_{i+1},b_{i+1}]$ for some $i$.
Arguing as in the proof that $2$-cycles don't exist, we see that all of $C_i$ must be drawn to the right of $C_{i+1}$ to satisfy (C2).
There is a point $p$ that is left of (or an endpoint of) some arc in $C_{i-1}$ and right of (or an endpoint of) some arc in $C_i$.
The point $p$ is also right of $C_{i+1}$, and we conclude that $C_{i+1}$ is left of $C_{i-1}$.
Thus we obtain a $(k-1)$-cycle by removing $C_i$ from $C_1,\ldots,C_k$.
If instead $[a_i,b_i]$ contains $[a_{i+1},b_{i+1}]$, the we similarly make a $(k-1)$-cycle.

Now suppose that for all $i$, there is no containment relation between $[a_i,b_i]$ and $[a_{i+1},b_{i+1}]$.
Since $C_1,\ldots,C_k$ is a cycle, there exists $i$ with ${a_{i-1}<a_i<b_{i-1}<b_i}$ and ${a_{i+1}<a_i<b_{i+1}<b_i}$.
Then the point $a_i$ is to the left of some arc of $C_{i-1}$ and to the right of some arc of $C_{i+1}$.
Thus again we obtain a $(k-1)$-cycle by removing $C_i$ from $C_1,\ldots,C_k$.

We have shown that, for $k\ge3$, if there exists a $k$-cycle in the ``is left of'' relation, then there exists a $(k-1)$-cycle.
Since $1$-cycles and $2$-cycles don't exist, this completes the proof of the claim that the relation is acyclic.

We now use the claim to recursively construct a permutation $x=x_1\cdots x_n$ with the following properties:
\begin{enumerate}
\item[(i)]If $i>j$, then $i$ and $j$ form a descent in $x$ if and only if $i$ and $j$ are connected by an arc in $\E$.
\item[(ii)]If $i>j>k$ and $i$ and $k$ are connected by an arc in $\E$, then $j$ is to the left of $i$ in $x$ if and only if $j$ is to the left of the arc connecting $i$ to $k$.
(Equivalently, $j$ is right of $k$ in $x$ if and only if $j$ is right of the arc.)
\end{enumerate}

The claim implies that $\G$ has at least one \newword{left component}, meaning a component that is not right of any other component of $\G$.
Since each left component has nothing to its left, the left components can be totally ordered from smaller-valued endpoints to higher-valued ones.
Take the smallest-valued left component, delete it from $\G$ and write its labels in decreasing order.
(When we delete the component, we keep the original labels on the remaining points of $\G$.)
By the claim, if the remaining graph is nonempty, then it has at least one left component, and in particular may have additional left components that were not left components in the original diagram.
We again take the smallest left component, delete it, and write its labels in decreasing order.
When all of the points of the diagram have been deleted, the result is a permutation.

The output of the process is a permutation that satisfies condition (ii) above by construction.
Because the arcs in $\E$ satisfy (C2) pairwise, each arc in the diagram becomes a descent in the permutation.
It is also easy to see that there are no other descents:
If there were another descent, that would mean that at some step the process deletes one left component $C_1$, and then another left component $C_2$ whose highest point is lower than the lowest point of $C_1$.
But then $C_2$ was already a left component before $C_1$ was deleted, so $C_2$ should have been deleted before $C_1$.
By this contradiction, we see that the output permutation $x$ satisfies (i).

Conditions (i) and (ii) imply that $\delta(x)$ is a noncrossing arc diagram whose arcs are exactly $\E$.
In particular, we have proved Proposition~\ref{pairwise works}.
We have also shown that $\delta$ is surjective, thus completing the proof that it is a bijection.
\end{proof}

The inverse to $\delta$ is given by the recursive process described in the proof above, as exemplified in Figure~\ref{inversefig}.
At each step, the left components of the remaining diagram are shown in red (or gray if the figure is not viewed in color).
\begin{figure}
\begin{tabular}{|l|l|l|l|l|l|l|}
\hline&&&&&&\\[-8pt]
Step&Start&1&2&3&4&5\\[2 pt]\hline&&&&&&\\[-8pt]
Permutation so far&&4&46&46731&4673152&46731528\\[2 pt]\hline&&&&&&\\[-8pt]
\raisebox{59pt}{Diagram remaining}&\scalebox{0.85}{\includegraphics{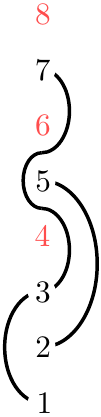}}&\scalebox{0.85}{\includegraphics{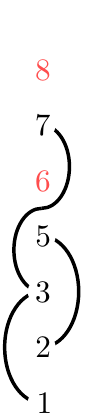}}&\scalebox{0.85}{\includegraphics{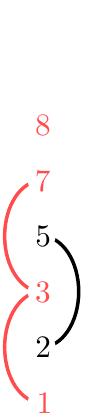}}&\scalebox{0.85}{\includegraphics{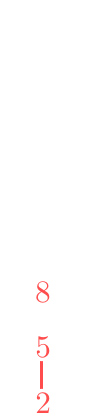}}&\scalebox{0.85}{\includegraphics{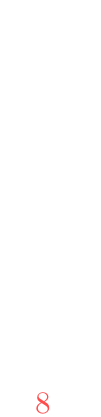}}&\\[2 pt]\hline
\end{tabular}
\caption{The map from noncrossing arc diagrams to permutations}
\label{inversefig}
\end{figure}

\begin{remark}\label{foreshadowing}
Noncrossing arc diagrams and the map $\delta$ are foreshadowed in work~\cite{shardint} on \newword{shard intersections} and particularly in work of Bancroft~\cite{Bancroft} and Petersen~\cite{Petersen} on shard intersections in type A. 
See in particular \cite[Theorem~3.6]{shardint}, \cite[Proposition~4.7]{shardint}, \cite[Section~3]{Bancroft}, and \cite[Section~2.1]{Petersen}.
\end{remark}

The noncrossing arc diagrams are the faces of a simplicial complex whose vertices are the arcs.
The face corresponding to a noncrossing arc diagram is the set of arcs appearing in the diagram.
The following is an immediate corollary of Theorem~\ref{main}.

\begin{cor}\label{main can join cplx}
The simplicial complex of noncrossing arc diagrams on $n$ points is isomorphic to the canonical join complex of the weak order on $S_n$.
The isomorphism is induced by the map from arcs to join-irreducible elements.
\end{cor}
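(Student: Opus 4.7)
The plan is to unpack the phrase ``immediate corollary'' by checking that the vertex bijection between arcs and join-irreducible permutations, already established in the paragraph preceding Theorem~\ref{main}, extends on the nose to a bijection of faces.

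First I would recall that the discussion before Theorem~\ref{main} supplies a bijection $\phi$ between arcs satisfying (A) and join-irreducible elements of $S_n$: the arc with endpoints $a<b$ and prescribed set of points on its left corresponds to the unique join-irreducible $\lambda$ whose single descent is $b>a$ and whose entries strictly between positions of $b$ and $a$ have the prescribed set to the left. This is the vertex map in the statement of the corollary, and it is bijective on $0$-cells by construction. To promote $\phi$ to a simplicial isomorphism, I need to check that a set $S$ of join-irreducibles is a face of the canonical join complex if and only if $\phi(S)$ is a face of the simplicial complex of noncrossing arc diagrams.

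For the forward direction, suppose $S$ is a canonical join representation, say $\Join S=x$. By Theorem~\ref{perm can join}, the canonical joinands of $x$ are exactly the $\lambda(x,i)$ for descents $i$ of $x$, so by the definition of $\delta$ we have $\phi(S)=\delta(x)$. Theorem~\ref{main} then tells us that $\delta(x)$ is a noncrossing arc diagram, i.e., a face. For the reverse direction, suppose $D$ is a noncrossing arc diagram. By the surjectivity half of Theorem~\ref{main}, there exists $x\in S_n$ with $\delta(x)=D$, and then the set $\phi^{-1}(D)$ is exactly the set of canonical joinands of $x$, hence a face of the canonical join complex. Since $\phi$ is already a bijection on vertices, this shows it is a bijection on faces, and therefore a simplicial isomorphism.

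There is no real obstacle here; the only point that requires any care is keeping straight that the definition of $\delta$ is precisely the composition ``take canonical joinands, then apply $\phi$,'' so that Theorem~\ref{main}'s bijectivity of $\delta$ translates directly into a bijection between canonical join representations and noncrossing arc diagrams. Once that dictionary is noted, the corollary follows with no further work.
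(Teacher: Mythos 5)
Your proposal is correct and matches the paper's intent exactly: the paper treats this as an immediate consequence of Theorem~\ref{main}, and your unpacking --- that $\delta$ is by definition ``take canonical joinands, then apply the arc bijection,'' so bijectivity of $\delta$ gives the face bijection in both directions --- is precisely the argument the paper leaves implicit. No gaps.
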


Figure~\ref{arc complex} shows the canonical join complex of the weak order on $S_4$, in the form of the complex of noncrossing arc diagrams.
\begin{figure}
\includegraphics{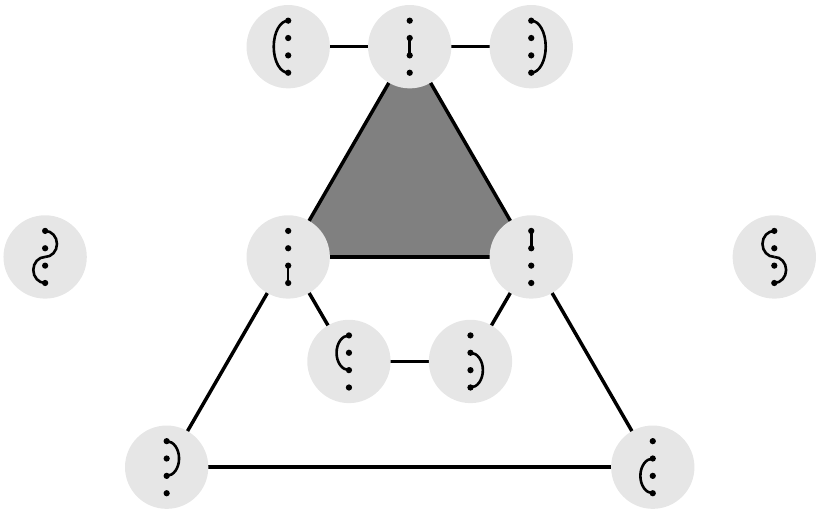}
\caption{The canonical join complex of $S_4$}
\label{arc complex}
\end{figure}

Theorem~\ref{main} and Proposition~\ref{pairwise works} also lead to an answer to Question~\ref{can q}.
In the following corollary, the equivalence of (i) and (ii) is immediate by Theorem~\ref{main} and the equivalence of these with (iii) and (iv) follows by Proposition~\ref{pairwise works}.

\begin{cor}\label{can a}
Suppose $J$ is a set of join-irreducible elements of $S_n$ and suppose $\E$ is the corresponding collection of arcs.
The following are equivalent.
\begin{enumerate}
\item[\textnormal{(i)}] $J$ is the canonical join representation of a permutation.
\item[\textnormal{(ii)}] There is a noncrossing arc diagram whose arcs are combinatorially equivalent to the arcs in $\E$.
\item[\textnormal{(iii)}] The arcs in $\E$ are pairwise compatible.
\item[\textnormal{(iv)}] Each $2$-element subset of $J$ is the canonical join representation of a permutation.
\end{enumerate}
\end{cor}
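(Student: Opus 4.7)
The plan is to deduce the four-way equivalence directly from Theorem~\ref{main}, Proposition~\ref{pairwise works}, and the earlier proposition (between Proposition~\ref{ji can} and Proposition~\ref{min below with inv}) stating that subsets of canonical join representations remain canonical join representations. The strategy is not to prove anything cyclic from scratch, but to chain together results already in hand.

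First I would dispatch (i)$\Leftrightarrow$(ii). By construction, the bijection $\delta$ of Theorem~\ref{main} sends a permutation $x$ to the noncrossing arc diagram whose arcs correspond, under the bijection between join-irreducible permutations and arcs satisfying (A), to the canonical joinands of $x$. Hence $J$ is a canonical join representation of some permutation if and only if $\E = \delta(x)$ for some $x$, if and only if $\E$ is combinatorially equivalent to a noncrossing arc diagram. Next I would handle (ii)$\Leftrightarrow$(iii): the implication (ii)$\Rightarrow$(iii) is immediate from the definition of compatibility in Section~\ref{diagrams sec}, since any two arcs sharing a noncrossing arc diagram are compatible, while the converse (iii)$\Rightarrow$(ii) is exactly Proposition~\ref{pairwise works}. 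Finally, for (i)$\Leftrightarrow$(iv), the forward direction is immediate from the earlier proposition applied to every $2$-element subset of $J$, and the converse follows by combining (iv) with the already-established (i)$\Leftrightarrow$(ii): each pair of arcs in $\E$ arises in some $2$-arc noncrossing diagram, so the arcs of $\E$ are pairwise compatible, and then (iii)$\Rightarrow$(ii)$\Rightarrow$(i) closes the loop.

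The main obstacle, if one can call it that, is already absorbed into Proposition~\ref{pairwise works}: the genuinely nontrivial content is the acyclicity argument that lets pairwise compatibility of arcs be upgraded to a joint drawing. Given that, the corollary itself is essentially bookkeeping. The one conceptually interesting point worth emphasizing in the writeup is that (iv) is a strictly \emph{pairwise} condition on join-irreducibles, so the corollary states the mildly surprising fact that canonical join representations in the weak order on $S_n$ are detected by looking at all pairs of joinands at once.
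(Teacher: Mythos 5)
Your proposal is correct and follows essentially the same route as the paper, which likewise derives (i)$\Leftrightarrow$(ii) from Theorem~\ref{main} and obtains the equivalence with (iii) and (iv) from Proposition~\ref{pairwise works}; your extra use of the subset-of-a-canonical-join-representation proposition for (i)$\Rightarrow$(iv) is a valid (and equivalent) way to fill in a step the paper leaves implicit.
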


A simplicial complex is called \newword{flag} it it is the clique complex of its $1$-skeleton.
Equivalently, it is flag if, for every subset $S$ of the vertices not forming a face, there is a pair of distinct elements of $S$ not forming an edge in the complex.
The following corollary is immediate by Corollary~\ref{can a}.

\begin{cor}\label{arcs flag}
The canonical join complex of the weak order on permutations is flag.
\end{cor}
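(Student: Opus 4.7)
The plan is to unpack the definition of a flag complex and observe that the required implication is exactly the content of the equivalence (iv) $\Rightarrow$ (i) in Corollary~\ref{can a}. Concretely, the canonical join complex has as vertices the join-irreducible permutations and as faces those sets $J$ of join-irreducibles such that $\Join J$ is a canonical join representation; in particular, its edges are the two-element sets $\{j_1,j_2\}$ that form canonical join representations.

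First, I would recall that a simplicial complex is flag if and only if every set of vertices that is pairwise joined by edges is itself a face. So to prove Corollary~\ref{arcs flag}, I fix a set $J$ of join-irreducible permutations in $S_n$ and assume that every two-element subset of $J$ is a face of the canonical join complex, i.e.\ that every pair in $J$ is the canonical join representation of some permutation. I must show that $J$ itself is the canonical join representation of some permutation, hence a face.

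This is precisely the implication from (iv) to (i) in Corollary~\ref{can a}. So I would simply invoke that corollary: the hypothesis is condition (iv), and the conclusion (i) says that $J$ is a canonical join representation, so $J$ is a face of the canonical join complex. This finishes the argument.

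There is no real obstacle here; all the work has been done upstream. The substantive content lies in Corollary~\ref{can a}, whose combinatorial core is the passage from pairwise compatibility of arcs (condition (iii)) to the existence of a single noncrossing arc diagram containing all of them (condition (ii)), established via Theorem~\ref{main} and Proposition~\ref{pairwise works}. Once that is in hand, flagness of the canonical join complex is merely the restatement that pairwise compatibility of arcs $\E$ implies the existence of a noncrossing arc diagram on $\E$, translated across the bijection $\delta$ between join-irreducibles and arcs.
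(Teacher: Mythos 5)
Your proof is correct and matches the paper's: the paper derives Corollary~\ref{arcs flag} immediately from the implication (iv) $\Rightarrow$ (i) of Corollary~\ref{can a}, exactly as you do. Your unpacking of the flag condition as ``every pairwise-edge-connected vertex set is a face'' is the right reading of the paper's definition, and the attribution of all substantive content to Proposition~\ref{pairwise works} via Corollary~\ref{can a} is accurate.
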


%  Since we don't answer it in this paper, and we have almost no data one way or another, why pose it?  
%Corollary suggests the following question, which we don't answer in this paper.
%
%\begin{question}\label{flag q}
%Which join-semidistributive finite lattices have \emph{flag} canonical join complexes?
%\end{question}

Theorem~\ref{main} also immediately implies some additional counting results, and combines with known results to prove others.

Let $\eulerian{n}{k}$ denote the Eulerian number, the number of permutations of $\set{1,\ldots,n}$ with exactly $k$ descents.
By Theorem~\ref{perm can join}, the descents of a permutation are in bijection with the join-irreducible permutations in its canonical join representation.
The latter are in bijection with the arcs in the corresponding noncrossing arc diagram, so we have the following theorem.
\begin{theorem}\label{diagrams eulerian}
The number of noncrossing arc diagrams on $n$ points with exactly $k$ arcs is $\eulerian{n}{k}$.
\end{theorem}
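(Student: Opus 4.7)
The plan is to chain together the three ingredients that have already been established in the excerpt: the bijection $\delta$ from $S_n$ to noncrossing arc diagrams (Theorem~\ref{main}), the explicit description of canonical join representations in terms of descents (Theorem~\ref{perm can join}), and the bijective correspondence between canonical joinands and arcs (built into the definition of $\delta$). The statement to prove is purely a counting refinement of Theorem~\ref{diagrams perm}, and so the work consists of tracking the statistic ``number of arcs'' across these bijections.

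First I would fix a permutation $x \in S_n$ and consider its image $\delta(x)$. By construction, $\delta$ sends $x$ to the set of arcs obtained from the elements of the canonical join representation of $x$ under the bijection between join-irreducible permutations and arcs satisfying (A). In particular, the number of arcs in the diagram $\delta(x)$ equals the number of canonical joinands of $x$. Next, I would invoke Theorem~\ref{perm can join}, which identifies the canonical joinands of $x$ with the permutations $\lambda(x,i)$ indexed by the descents $x_i > x_{i+1}$ of $x$. Since distinct descents yield distinct values of $\lambda(x,i)$ (they contribute different descent pairs $(x_i, x_{i+1})$ to the inversion sets, as recorded in the second paragraph of the proof of Theorem~\ref{perm can join}), the number of canonical joinands of $x$ is exactly the number of descents of $x$.

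Combining the two previous observations, the map $\delta$ restricts to a bijection between the permutations in $S_n$ with exactly $k$ descents and the noncrossing arc diagrams on $n$ points with exactly $k$ arcs. By the definition of the Eulerian number $\eulerian{n}{k}$ as the number of permutations of $\{1,\ldots,n\}$ with exactly $k$ descents, the count of such diagrams is $\eulerian{n}{k}$, completing the proof.

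There is essentially no obstacle: all the nontrivial content has been packaged into Theorem~\ref{main} and Theorem~\ref{perm can join}. The only small point to mention carefully is that the descent-to-arc correspondence is an actual bijection, not just a surjection, so that the arc count equals the descent count and no double-counting occurs; this is visible from the fact that an arc records its two endpoints (the descent values) together with its left/right data, so distinct descents in $x$ produce arcs with distinct endpoint pairs.
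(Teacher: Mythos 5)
Your proposal is correct and follows exactly the paper's argument: the paper also derives this by composing the bijection $\delta$ with the correspondence from Theorem~\ref{perm can join} between descents and canonical joinands, which are in turn in bijection with the arcs of the diagram. Your extra care in checking that distinct descents give distinct joinands is already implicit in the antichain argument within the proof of Theorem~\ref{perm can join}, so nothing is missing.
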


Say a noncrossing arc diagram is a \newword{matching} if all of its arcs are disjoint, even at their endpoints.
In this case, the diagram is a planar representation of a matching in the usual graph-theoretic sense.
A noncrossing arc diagram is a matching if and only if the corresponding permutation $x$ has no three consecutive entries $x_ix_{i+1}x_{i+2}$ with $x_i>x_{i+1}>x_{i+2}$.  
Such permutations are said to \newword{avoid the consecutive pattern $321$}.
The following theorem is an immediate consequences of Theorem~\ref{main}.
\begin{theorem}\label{diagrams matchings}
The noncrossing arc diagrams on $n$ points that are matchings are in bijection with permutations avoiding the consecutive pattern $321$.  
\end{theorem}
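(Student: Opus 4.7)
The plan is to apply Theorem~\ref{main} directly and characterize explicitly which permutations $x\in S_n$ have the property that $\delta(x)$ is a matching. Since Theorem~\ref{main} gives a bijection $\delta: S_n \to \{\text{noncrossing arc diagrams on } n \text{ points}\}$, it suffices to show that $\delta(x)$ is a matching if and only if $x$ avoids the consecutive pattern $321$.

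Recall from the description preceding Theorem~\ref{main} (and the illustration in Figure~\ref{map to ncs}) that the arcs of $\delta(x)$ are in bijection with the descents of $x$: each descent $x_i>x_{i+1}$ produces an arc whose lower endpoint is the value $x_{i+1}$ and whose upper endpoint is the value $x_i$. Hence, as the first step, I would note that two distinct arcs of $\delta(x)$ share an endpoint if and only if two distinct descents of $x$ involve a common value.

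Next I would analyze when two descents share a value. Suppose descents $x_i>x_{i+1}$ and $x_j>x_{j+1}$ with $i<j$ share a value $v$. By condition (C2) (already known to hold for $\delta(x)$), the shared value must be the upper endpoint of one arc and the lower endpoint of the other, so $v=x_{i+1}=x_j$. Since each value appears once in $x$, this forces $j=i+1$, giving three consecutive entries $x_i>x_{i+1}>x_{i+2}$, i.e., a consecutive $321$. Conversely, any consecutive $321$ at positions $i,i+1,i+2$ yields two descents sharing the value $x_{i+1}$, hence two arcs of $\delta(x)$ sharing an endpoint.

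Combining these observations, $\delta(x)$ is a matching precisely when $x$ has no three consecutive entries in strictly decreasing order, so $\delta$ restricts to a bijection between permutations avoiding the consecutive pattern $321$ and matching noncrossing arc diagrams on $n$ points. There is no real obstacle here beyond verifying the endpoint-sharing analysis above; the theorem really is an immediate corollary of Theorem~\ref{main} together with the explicit description of the arcs of $\delta(x)$ in terms of the descents of $x$.
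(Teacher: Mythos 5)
Your proposal is correct and matches the paper's treatment: the paper likewise observes that a diagram $\delta(x)$ is a matching exactly when no two descents of $x$ share a value, which happens precisely when $x$ has no three consecutive decreasing entries, and then cites Theorem~\ref{main}. You simply spell out the endpoint-sharing analysis that the paper leaves as ``immediate.''
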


The exponential generating function for permutations avoiding the consecutive pattern $321$ (and thus for noncrossing arc diagrams that are matchings) is determined in~\cite[Theorem~4.1]{ElizaldeNoy}.

Say a noncrossing arc diagram is a \newword{perfect matching} if it is a matching and each point is the endpoint of an arc.
An \newword{alternating permutation}\footnote{Conventions vary:  Sometimes the term alternating permutation refers instead to permutations satisfying $x_1<x_2>x_3<x_4>\cdots >x_{2n-1}<x_{2n}$, and sometimes the term refers to permutations satisfying either of the two conditions.} in $S_{2n}$ is a permutation $x$ with $x_1>x_2<x_3>x_4<\cdots <x_{2n-1}>x_{2n}$.
Alternating permutations in $S_{2n}$ are characterized by avoiding the consecutive pattern $321$ and having exactly $n$ descents.
Thus $\delta$ restricts to the bijection described in the following theorem.

\begin{theorem}\label{diagrams perfect}
Alternating permutations in $S_{2n}$ are in bijection with noncrossing arc diagrams on $2n$ points that are perfect matchings.
\end{theorem}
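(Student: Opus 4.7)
The plan is essentially bookkeeping: assemble the result from Theorem~\ref{main}, Theorem~\ref{diagrams matchings}, and the observation controlling the number of arcs.

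First, I would record the combinatorial matching between conditions on the diagram side and conditions on the permutation side. By Theorem~\ref{main}, $\delta$ is a bijection from $S_{2n}$ to noncrossing arc diagrams on $2n$ points. By Theorem~\ref{perm can join}, the arcs of $\delta(x)$ correspond to the descents of $x$, so diagrams with exactly $n$ arcs correspond to permutations with exactly $n$ descents. A matching on $2n$ points is perfect if and only if it has exactly $n$ arcs (each arc contributes two distinct endpoints and in a matching endpoints of different arcs are disjoint, so $n$ arcs saturate all $2n$ points; conversely, fewer arcs leave some point unused).

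Next, I would combine this with Theorem~\ref{diagrams matchings}, which says $\delta$ restricts to a bijection between permutations avoiding the consecutive pattern $321$ and noncrossing arc diagrams that are matchings. Intersecting the two restrictions, $\delta$ gives a bijection between permutations $x \in S_{2n}$ which both avoid the consecutive pattern $321$ and have exactly $n$ descents, and noncrossing arc diagrams on $2n$ points that are perfect matchings.

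Finally, I would invoke the characterization stated just before the theorem: alternating permutations in $S_{2n}$ are precisely the permutations in $S_{2n}$ that avoid the consecutive pattern $321$ and have exactly $n$ descents. (The one-line justification, should it be needed: avoiding consecutive $321$ means no two adjacent positions in $\{1,\ldots,2n-1\}$ are both descents, so the descent set is an independent set in the path; the maximum such set has size $n$, attained only by $\{1,3,5,\ldots,2n-1\}$, which is exactly the descent pattern of an alternating permutation.) This identifies the domain of the restricted $\delta$ with the alternating permutations and completes the proof.

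There is no real obstacle here; the theorem is essentially a corollary of Theorems~\ref{main}, \ref{perm can join}, and~\ref{diagrams matchings} together with the already-stated characterization of alternating permutations. The only minor thing to be careful about is verifying that ``perfect matching'' and ``matching with $n$ arcs'' coincide on $2n$ points, which is immediate.
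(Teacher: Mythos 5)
Your proposal is correct and follows exactly the paper's route: the paper likewise observes that alternating permutations in $S_{2n}$ are characterized by avoiding the consecutive pattern $321$ and having exactly $n$ descents, and then restricts $\delta$ using Theorems~\ref{main}, \ref{perm can join}, and~\ref{diagrams matchings}. Your added verifications (uniqueness of the maximum independent descent set and the equivalence of ``perfect matching'' with ``matching with $n$ arcs'') are correct details the paper leaves implicit.
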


The exponential generating function for alternating permutations (and thus for noncrossing arc diagrams that are perfect matchings) is $\sec x$.

Left noncrossing perfect matchings in the sense of this paper are also well-known, usually under the name ``noncrossing matchings.''
Restricting $\delta$, these are in bijection with $231$-avoiding alternating permutations.
From \cite[Theorem~2.2]{Mansour} (for the permutations) or from \cite[Exercise~6.19(o)]{EC1} (for the matchings), we obtain the following enumeration:
The number of left noncrossing arc diagrams on $2n$ points that are perfect matchings is the Catalan number $C_n=\frac1{n+1}\binom{2n}n$.

\section{Noncrossing arc diagrams and lattice quotients of the weak order}\label{quot sec}
In this section, we discuss how restricted classes of noncrossing arc diagrams arise from lattice quotients of the weak order modulo lattice congruences.
We begin by briefly reviewing some background on lattice congruences.
%(For more background and for proofs, see for example \cite{What?}.)  \margin{Get this reference!  Maybe give precise references throughout?}
We then describe how noncrossing arc diagrams are a convenient combinatorial model for lattice quotients of the weak order.
Finally, we give some examples of restricted classes of noncrossing arc diagrams arising from quotients.

A \newword{congruence} on a lattice $L$ is an equivalence relation on $L$ that respects the meet and join operations.
That is, if $x_1\equiv y_1$ and $x_2\equiv y_2$, then $x_1\meet x_2\equiv y_1\meet y_2$ and $x_1\join x_2\equiv y_1\join y_2$.
The property of respecting meets and joins is equivalent, \emph{for finite lattices}, to the following three conditions:
First, equivalence classes are intervals in the lattice.
Second, the map $\pidown^\Theta$ taking each element to the bottom element of its equivalence class is order-preserving.
Third, the map $\piup_\Theta$ taking each element to the top element of its equivalence class is order-preserving.

The \newword{quotient} of $L$ modulo a congruence $\Theta$ is the lattice $L/\Theta$ whose elements are the congruence classes with the meet or join of classes defined by taking the meet or join of representatives.
That is, if $C$ and $D$ are congruence classes with $x\in C$ and $y\in D$, then $C\meet D$ is the class of $x\meet y$ and $C\join D$ is the class of $x\join y$.
\emph{For finite lattices}, the quotient is isomorphic as a poset to the subposet $\pidown^\Theta(L)$ of $L$ induced by the elements that are the bottom elements of their congruence classes. 
(The subposet $\pidown^\Theta(L)$ is always a join-sublattice of $L$ but can fail to be a sublattice of~$L$.)
This way of thinking about the quotient suggests that we think of ``contracting'' each congruence class onto its bottom element.
An element of $L$ is \newword{contracted} by $\Theta$ if it is congruent, modulo $\Theta$, to some element below it.
Thus an element is \newword{uncontracted} if and only if it is at the bottom of its congruence class, and so $\pidown^\Theta(L)$ is the subposet of $L$ induced by uncontracted elements.

In a lattice $L$ where each element has a canonical join representation, an element is contracted by $\Theta$ if and only if one or more of its canonical joinands is contracted by $\Theta$.
In particular, the join-irreducible elements of $\pidown^\Theta(L)$ are exactly the join-irreducible elements of $L$ not contracted by $\Theta$.
Furthermore, the canonical join-representation of an element of the quotient $\pidown^\Theta(L)$ coincides with its canonical join-representation in $L$.
%(In particular, the congruence is determined uniquely by which join-irreducible elements it contracts.)
Thus Theorem~\ref{main} implies the following theorem.

\begin{theorem}\label{quot diagram}
Given a lattice congruence $\Theta$ on the weak order on $S_n$, the elements of the quotient lattice $\pidown^\Theta(S_n)$ are in bijection with the noncrossing arc diagrams on $n$ points consisting only of arcs corresponding to join-irreducible elements not contracted by $\Theta$.
The bijection maps an element of $\pidown^\Theta(S_n)$ to the set of arcs corresponding to its canonical join-representation (in $\pidown^\Theta(S_n)$ or in $S_n$).
\end{theorem}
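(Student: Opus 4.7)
The plan is to deduce this theorem by restricting the bijection $\delta$ of Theorem~\ref{main} to the subposet $\pidown^\Theta(S_n)$ of uncontracted elements, using the facts about canonical join representations and congruences that were recalled in the paragraphs preceding the theorem.

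First I would recall that, by Theorem~\ref{main}, $\delta$ is a bijection from $S_n$ to the set of all noncrossing arc diagrams on $n$ points, and that by construction $\delta(x)$ is the set of arcs corresponding (under the bijection between join-irreducible permutations and arcs) to the canonical joinands of $x$. Next, I would invoke the stated fact that in a finite lattice in which every element has a canonical join representation, an element $x$ is contracted by $\Theta$ if and only if at least one of its canonical joinands is contracted by $\Theta$. Equivalently, $x$ lies in $\pidown^\Theta(S_n)$ precisely when every canonical joinand of $x$ is uncontracted.

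Translating this through $\delta$, the element $x$ belongs to $\pidown^\Theta(S_n)$ if and only if every arc in the diagram $\delta(x)$ corresponds to a join-irreducible element of $S_n$ that is not contracted by $\Theta$. Thus $\delta$ restricts to an injection from $\pidown^\Theta(S_n)$ into the set of noncrossing arc diagrams whose arcs all correspond to uncontracted join-irreducibles. For surjectivity, I would start with such a diagram $D$ and set $x = \delta^{-1}(D)$, which exists by Theorem~\ref{main}. Since the arcs of $D$ are exactly the arcs corresponding to the canonical joinands of $x$, and since every such arc corresponds to an uncontracted join-irreducible by assumption on $D$, applying the contrapositive of the contraction criterion shows that $x$ itself is uncontracted, so $x \in \pidown^\Theta(S_n)$.

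Finally, the last sentence of the theorem, asserting that the bijection sends an element to the arcs of its canonical join representation (in either $\pidown^\Theta(S_n)$ or $S_n$), is immediate from the definition of $\delta$ together with the stated fact that the canonical join representation of an element of $\pidown^\Theta(S_n)$ coincides with its canonical join representation in $S_n$. There is no real obstacle here: the proof is an essentially formal assembly of Theorem~\ref{main} with the general lattice-theoretic facts about quotients and canonical joinands already recorded in the section, and the only mild care needed is in verifying that the bijection between join-irreducible permutations and arcs respects the notion of ``contracted'' in the sense required.
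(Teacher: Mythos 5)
Your proposal is correct and follows the same route as the paper, which simply observes that the general facts about congruences (an element is contracted iff some canonical joinand is, and canonical join representations in the quotient agree with those in $S_n$) let one restrict the bijection $\delta$ of Theorem~\ref{main} to $\pidown^\Theta(S_n)$. The paper records those facts in the preceding paragraphs and then states that the theorem follows, which is exactly the assembly you carried out.
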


The canonical join complex of $\pidown^\Theta(L)$ is isomorphic to the subcomplex of the canonical join complex of $L$ induced by the vertices (join-irreducible elements of $L$) not contracted by $\Theta$.
Thus we have the following additional corollaries.  
(Compare Corollaries~\ref{main can join cplx} and~\ref{arcs flag}.)

\begin{cor}\label{quot can join cplx}
Given a lattice congruence $\Theta$ on the weak order on $S_n$, the canonical join complex of $\pidown^\Theta(S_n)$ is isomorphic to the simplicial complex of noncrossing arc diagrams on $n$ points using only arcs corresponding to join-irreducible elements not contracted by $\Theta$.
The isomorphism is induced by the map from join-irreducible elements to arcs.
\end{cor}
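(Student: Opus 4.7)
The plan is to assemble the corollary from Theorem~\ref{main}, Corollary~\ref{main can join cplx}, and the two structural remarks stated immediately before Theorem~\ref{quot diagram}: first, that the join-irreducible elements of $\pidown^\Theta(S_n)$ are precisely the join-irreducible permutations not contracted by $\Theta$; second, that the canonical join representation of an element of $\pidown^\Theta(S_n)$, computed in the quotient, coincides with its canonical join representation in $S_n$. With these in hand, the corollary is essentially bookkeeping that the isomorphism of Corollary~\ref{main can join cplx} restricts to the desired induced subcomplex.

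First I would match up vertices. The vertex set of the canonical join complex of $\pidown^\Theta(S_n)$ is the set of join-irreducibles of the quotient, which by the first remark above is exactly the set of join-irreducible permutations not contracted by $\Theta$. Under the bijection from join-irreducible permutations to arcs (the vertex map of Corollary~\ref{main can join cplx}), this set of vertices corresponds precisely to the arcs in the restricted family appearing in the statement.

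Next I would match up faces. A face of the canonical join complex of $\pidown^\Theta(S_n)$ is, by definition, a set $S$ of join-irreducibles of the quotient such that $\Join S$ is a canonical join representation in $\pidown^\Theta(S_n)$. By the second remark above, this happens if and only if $S$ is a face of the canonical join complex of $S_n$ every one of whose elements is uncontracted by $\Theta$. In other words, the canonical join complex of $\pidown^\Theta(S_n)$ is the subcomplex of the canonical join complex of $S_n$ induced by the uncontracted join-irreducibles. Applying the isomorphism of Corollary~\ref{main can join cplx}, this subcomplex is carried to the subcomplex of noncrossing arc diagrams on $n$ points induced by the arcs in the restricted family, which is exactly the simplicial complex described in the statement.

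There is no serious obstacle: the work was already done in establishing Theorem~\ref{main}, Corollary~\ref{main can join cplx}, and Theorem~\ref{quot diagram}. The only point that warrants a moment of care is invoking the coincidence of canonical join representations between $\pidown^\Theta(S_n)$ and $S_n$, but this is the same fact used to prove Theorem~\ref{quot diagram} and does not require anything new here.
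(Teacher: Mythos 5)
Your proposal is correct and follows essentially the same route the paper takes: the paper states the general fact that the canonical join complex of $\pidown^\Theta(L)$ is the subcomplex of the canonical join complex of $L$ induced on uncontracted join-irreducibles, and then applies Corollary~\ref{main can join cplx}; your argument simply unpacks why that general fact holds. One small point you could make more explicit is the converse direction of your face-matching equivalence: to see that a face $S$ of the canonical join complex of $S_n$ consisting of uncontracted join-irreducibles is also a face for the quotient, you need the first observation the paper records (an element is contracted if and only if one of its canonical joinands is contracted) to conclude that $\Join S$ itself lies in $\pidown^\Theta(S_n)$.
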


\begin{cor}\label{arcs quot flag}
For any lattice congruence $\Theta$ on the weak order on $S_n$, the canonical join complex of $S_n/\Theta$ is flag.
\end{cor}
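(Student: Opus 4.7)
The plan is to deduce this directly from the two preceding results: Corollary~\ref{quot can join cplx}, which identifies the canonical join complex of $\pidown^\Theta(S_n)$ with a subcomplex of the simplicial complex of noncrossing arc diagrams, together with Corollary~\ref{arcs flag}, which says the full complex is flag. The key general observation is that an induced subcomplex of a flag complex is flag.

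First, I would unpack Corollary~\ref{quot can join cplx} to observe that the canonical join complex of $\pidown^\Theta(S_n)$ is an \emph{induced} subcomplex of the canonical join complex of $S_n$: its vertex set consists of exactly the arcs corresponding to the join-irreducible elements of $S_n$ not contracted by $\Theta$, and a set of such arcs is a face in the subcomplex if and only if it is a face in the ambient complex (that is, if and only if it forms a noncrossing arc diagram). This ``induced'' property is essentially built into the statement of Corollary~\ref{quot can join cplx}.

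Next, I would state the simple general fact: if $K$ is a flag simplicial complex and $K'$ is the subcomplex of $K$ induced by a subset $V'$ of its vertices, then $K'$ is flag. Indeed, if $S \subseteq V'$ has the property that every $2$-element subset of $S$ is a face of $K'$, then by the induced property every $2$-subset of $S$ is a face of $K$, so by flagness of $K$ the set $S$ is a face of $K$, and then $S$ is a face of $K'$ because all its vertices lie in $V'$.

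Applying this to $K=$ the canonical join complex of $S_n$ (flag by Corollary~\ref{arcs flag}) and $K'=$ the canonical join complex of $\pidown^\Theta(S_n)$ (an induced subcomplex of $K$ by Corollary~\ref{quot can join cplx}) yields the desired conclusion. There is no real obstacle here; the content of the corollary is packaged entirely into the earlier observations, and the only step worth writing out is the one-line verification that an induced subcomplex of a flag complex is flag.
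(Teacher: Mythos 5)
Your proposal is correct and matches the paper's implicit argument: the paper states just before Corollary~\ref{quot can join cplx} that the canonical join complex of $\pidown^\Theta(L)$ is the subcomplex of the canonical join complex of $L$ induced by the uncontracted join-irreducibles, and then records Corollary~\ref{arcs quot flag} as an immediate consequence together with Corollary~\ref{arcs flag}, exactly via the observation that an induced subcomplex of a flag complex is flag.
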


The fact that an element is contracted if and only if one or more of its canonical joinands is contracted implies in particular that the congruence is determined uniquely by which join-irreducible elements it contracts.
As one might expect, the join-irreducible elements cannot be contracted independently.  
Instead, there is a pre-order on join-irreducible elements such that a set of join-irreducible elements is contracted by some congruence if and only if that set is closed under ``going down'' in the preorder.
We refer to this preorder as \newword{forcing} and describe it in words rather than notation.
A join-irreducible element $j_1$ is above $j_2$ in the forcing order if every congruence contracting $j_1$ also contracts $j_2$.
In this case, we say that $j_1$ \newword{forces} $j_2$.

The forcing preorder on join-irreducible elements of the weak order on $S_n$ was worked out in \cite[Section~8]{congruence}.
In particular, in the weak order on $S_n$, the forcing preorder was shown to be a partial order (i.e.\ it has no directed cycles).
The forcing order on join-irreducible permutations has a nice description in terms of arcs satisfying condition (A).
We will phrase the description by saying that one arc forces another arc, meaning that the forcing relation holds on the corresponding join-irreducible permutations.

Suppose $\alpha_1$ and $\alpha_2$ are arcs satisfying (A), with $\alpha_1$ connecting points $p_1$ and $q_1$ with $p_1<q_1$ and with $\alpha_2$ connecting $p_2$ and $q_2$ with $p_2<q_2$.
We say $\alpha_1$ is a \newword{subarc} of $\alpha_2$ if
\begin{enumerate}
\item[(i) ]$p_2\le p_1<q_1\le q_2$, and 
\item[(ii) ]The set of points left of $\alpha_1$ equals the set of points in $\set{p_1+1,\ldots,q_1-1}$ left of $\alpha_2$.
\end{enumerate}
Less formally, to construct a subarc of an arc $\alpha$, we choose two distinct horizontal lines, each passing through one of the $n$ points and each intersecting $\alpha$, possibly at an endpoint of $\alpha$.
We construct the subarc by cutting $\alpha$ along the two lines and retaining the middle portion of $\alpha$.
Each endpoint of this middle section is attached to one of the $n$ points, specifically, the point at the same height.
(Possibly the endpoint of the middle section is already one of the $n$ points, if the line cuts $\alpha$ at an endpoint.)
This process is illustrated in Figure~\ref{subarcfig}.
\begin{figure}
\begin{tabular}{ccccc}
\scalebox{0.8}{\includegraphics{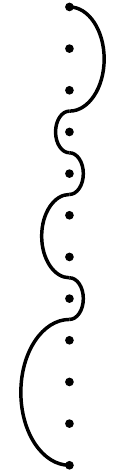}}&
\scalebox{0.8}{\includegraphics{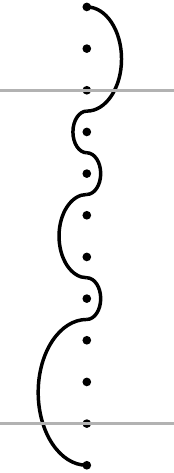}}&
\scalebox{0.8}{\includegraphics{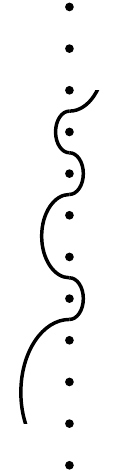}}&
\scalebox{0.8}{\includegraphics{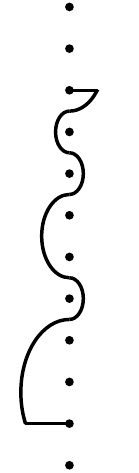}}&
\scalebox{0.8}{\includegraphics{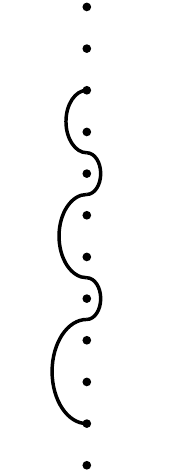}}\\
\end{tabular}
\caption{Constructing a subarc}
\label{subarcfig}
\end{figure}

In \cite[Section~8]{congruence}, the join-irreducible permutations are encoded as subsets as explained in \cite[Section~5]{congruence}.
Translating this encoding into the language of arcs and subarcs, either \cite[Theorem~8.1]{congruence} or \cite[Theorem~8.2]{congruence} immediately implies the following theorem:

\begin{theorem}\label{arc forcing}
An arc $\alpha_1$ forces an arc $\alpha_2$ if and only if $\alpha_1$ is a subarc of $\alpha_2$.
\end{theorem}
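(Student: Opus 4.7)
The plan is to reduce Theorem~\ref{arc forcing} to the forcing description already established in \cite[Section~8]{congruence} by translating the subset encoding of join-irreducible permutations from \cite[Section~5]{congruence} into the language of arcs and subarcs. The cited theorems do all the lattice-theoretic work of analyzing which sets of join-irreducibles can be contracted by a congruence, so the only thing left to do is to match the two combinatorial descriptions of the indexing set.

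First, I would make the bijection between arcs and join-irreducible permutations fully explicit in the form needed here. As established in Section~\ref{proof sec}, an arc $\alpha$ with lower endpoint $p$, upper endpoint $q$, and set $L\subseteq\set{p+1,\ldots,q-1}$ of points to its left corresponds to the join-irreducible $\lambda$ whose unique descent is $q>p$, with the values in $L$ appearing (in increasing order) to the left of that descent and the remaining values $\set{p+1,\ldots,q-1}\setminus L$ appearing (in increasing order) to the right. I would then unpack the subset encoding of join-irreducibles from \cite[Section~5]{congruence}, which records essentially the triple $(p,q,L)$, and restate the containment condition of \cite[Theorem~8.1]{congruence} (or the equivalent \cite[Theorem~8.2]{congruence}) in these terms.

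Second, I would verify that the translated containment condition is exactly the subarc condition. Concretely, the cited theorems will say that $\lambda_1$ forces $\lambda_2$ if and only if the triple $(p_1,q_1,L_1)$ fits inside $(p_2,q_2,L_2)$ in the sense that $p_2\le p_1<q_1\le q_2$ and $L_1=L_2\cap\set{p_1+1,\ldots,q_1-1}$. Conditions (i) and (ii) in the definition of subarc encode precisely these two requirements, so equivalence is immediate once notation is aligned.

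The main obstacle I anticipate is bookkeeping rather than new mathematical content. The encoding in \cite[Section~5]{congruence} is set up for general finite Coxeter groups, so transferring it to the concrete type-A arc picture requires careful attention to the boundary cases where $p_1=p_2$ or $q_1=q_2$ (that is, when the horizontal line cutting $\alpha_2$ passes through an endpoint of $\alpha_2$, so the subarc inherits one of its endpoints from $\alpha_2$) and to the sign and orientation conventions, so that ``left of the arc'' in the present paper matches the correct side in the encoding of \cite{congruence}. Once these conventions are pinned down and the triples $(p,q,L)$ are identified on both sides, Theorem~\ref{arc forcing} drops out with no further argument.
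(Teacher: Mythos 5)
Your proposal matches the paper's argument exactly: the paper likewise proves Theorem~\ref{arc forcing} simply by translating the subset encoding of join-irreducible permutations from \cite[Section~5]{congruence} into the triple $(p,q,L)$ determining an arc, and observing that the forcing criterion of \cite[Theorem~8.1]{congruence} (or \cite[Theorem~8.2]{congruence}) becomes precisely conditions (i) and (ii) in the definition of subarc. The bookkeeping you flag (endpoint boundary cases and left/right conventions) is exactly the content the paper leaves implicit, so your write-up is, if anything, slightly more explicit than the original.
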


Figure~\ref{forcingfig} shows the forcing order on join-irreducible permutations in $S_4$, represented by arcs.
\begin{figure}
\includegraphics{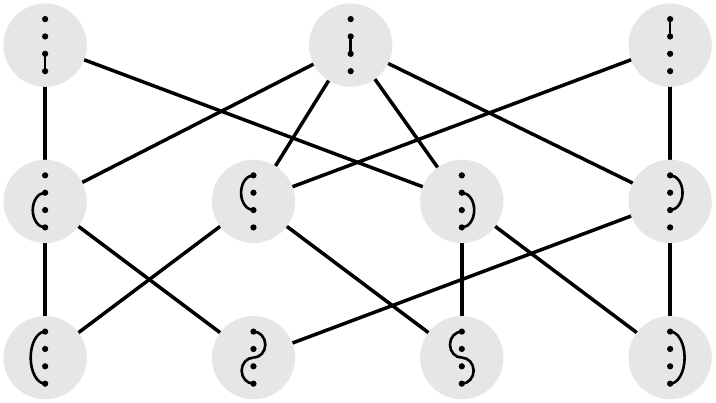}
\caption{The forcing order on arcs for $n=4$}
\label{forcingfig}
\end{figure}

Theorems~\ref{quot diagram} and~\ref{arc forcing} let us understand quotients of the weak order on $S_n$ entirely in terms of noncrossing arc diagrams.
(It is not immediately apparent how to realize the partial order/lattice structure on the quotient in terms of noncrossing arc diagrams, so we confine ourselves to statements about the quotients as sets.)
The following corollary is immediate by Theorem~\ref{arc forcing} (for the first assertion), then Theorem~\ref{quot diagram} (for the second and third assertions).

\begin{cor}\label{closed subarcs}
A set $U$ of arcs corresponds to the set of \emph{un}contracted join-irreducible permutations of some congruence $\Theta$ if and only if $U$ is closed under passing to subarcs.
In this case, the map $\delta$ restricts to a bijection from permutations not contracted by $\Theta$ to arc diagrams consisting only of arcs in $U$.
For each $k$, the map $\delta$ further restricts to a bijection between uncontracted permutations with exactly $k$ descents and arc diagrams consisting of exactly $k$ arcs, all of which are in~$U$.
\end{cor}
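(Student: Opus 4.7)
The plan is to chain together three facts already in hand: the forcing framework for congruences (stated just before Theorem~\ref{arc forcing}), the translation of forcing into the subarc relation (Theorem~\ref{arc forcing}), and the description of the quotient via $\delta$ (Theorem~\ref{quot diagram}), with a last appeal to Theorem~\ref{perm can join} for the descent count. Since the author signals that the corollary is ``immediate,'' the task is mainly to line up the directions carefully.

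For the first assertion, I would unwind the forcing characterization. A subset of join-irreducibles of $S_n$ is the contracted set of some congruence iff it is closed under going down in the forcing preorder. Passing to complements, a set $U$ of join-irreducibles is the uncontracted set of some congruence iff $U$ is closed under going \emph{up} in forcing: whenever $j \in U$ and some $j'$ forces $j$, also $j' \in U$. By Theorem~\ref{arc forcing}, ``$j'$ forces $j$'' translates to ``the arc of $j'$ is a subarc of the arc of $j$.'' Consequently $U$ is the uncontracted set of some congruence iff whenever the arc of $j$ lies in $U$ and $\alpha'$ is a subarc of it, $\alpha'$ also lies in $U$; this is exactly the condition ``closed under passing to subarcs.''

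For the second assertion, apply Theorem~\ref{quot diagram}: the map $\delta$ restricts to a bijection between the elements of $\pidown^\Theta(S_n)$ (that is, the permutations not contracted by $\Theta$) and the noncrossing arc diagrams whose arcs correspond to uncontracted join-irreducibles. By the first assertion, these are precisely the arc diagrams using only arcs in $U$.

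For the third assertion, combine the second with Theorem~\ref{perm can join}: the descents of a permutation $x$ are in natural bijection with its canonical joinands, which in turn correspond under $\delta$ to the arcs of $\delta(x)$. Hence an uncontracted permutation has exactly $k$ descents iff $\delta(x)$ has exactly $k$ arcs, and the bijection of the second assertion restricts accordingly. The only real obstacle is bookkeeping --- keeping straight that the complement of a downward-closed subset of a preorder is upward-closed, and that ``$\alpha_1$ forces $\alpha_2$'' corresponds to $\alpha_1$ being a \emph{sub}arc of $\alpha_2$ (so going up in forcing means shrinking to a subarc). Once the directions are aligned, the three assertions drop out.
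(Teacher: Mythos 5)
Your proposal is correct and follows exactly the route the paper intends: the paper declares the corollary ``immediate by Theorem~\ref{arc forcing} (for the first assertion), then Theorem~\ref{quot diagram} (for the second and third assertions),'' and your write-up simply fills in that chain, with the direction-checking (downward-closed contracted sets versus upward-closed uncontracted sets, and ``forces'' meaning ``is a subarc of'') handled correctly. No gaps.
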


We now give an explicit description of the uncontracted permutations.
Suppose $a$ and $b$ are integers with $1\leq a<b\leq n$ and suppose $R\subseteq\set{(a+1),\ldots,(b-1)}$.
Write $L=\set{(a+1),\ldots,(b-1)}\setminus R$.
A permutation $x$ \newword{has a $(b,a,R)$-pattern} if $(x_i,x_{i+1})$ is a descent of $x$, if $x_i\geq b$ and $x_{i+1}\leq a$, and if all of the elements of $L$ appear in $x$ \emph{to the left of} $x_i,x_{i+1}$ and all of the elements of $R$ appear in $x$ \emph{to the right of} $x_i,x_{i+1}$.
If $x$ has no $(b,a,R)$-pattern, then $x$ \newword{avoids $(b,a,R)$}.
The triple $(b,a,R)$ precisely specifies an arc $\alpha(b,a,R)$ connecting $a$ and $b$ and having the points in $R$ on its right while having the points in $L$ on its left.
The same information precisely specifies a join-irreducible permutation $\lambda(b,a,R)$ consisting of the entries $1,\ldots,(a-1)$, then $L$ in increasing order, then $b$, then $a$, then $R$ in increasing order, and finally $(b+1),\ldots,n$.
Our bijection between join-irreducible permutations and arcs sends $\lambda(b,a,R)$ to $\alpha(b,a,R)$.

\begin{cor}\label{perm quot}
Let $\Theta$ be the smallest congruence contracting the join-irreducible permutations in a given set $\set{\lambda(b_i,a_i,R_i):i\in I}$.
Then the permutations not contracted by $\Theta$ are exactly the permutations that avoid $(b_i,a_i,R_i)$ for every $i\in I$.
\end{cor}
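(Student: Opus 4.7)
The plan is to chain together the tools developed in the section: the forcing order on join-irreducibles (Theorem~\ref{arc forcing}), the characterization of uncontracted elements via canonical joinands (Corollary~\ref{closed subarcs}), and the explicit combinatorics of the bijection $\delta$. The key observation I want to establish is that, under $\delta$, a permutation $x$ has a $(b,a,R)$-pattern if and only if the arc $\alpha(b,a,R)$ is a subarc of one of the arcs of the diagram $\delta(x)$.

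First I would unpack $\Theta$. Since $\Theta$ is the smallest congruence contracting each $\lambda(b_i,a_i,R_i)$, the set of join-irreducibles it contracts is the downward closure of $\set{\lambda(b_i,a_i,R_i):i\in I}$ in the forcing preorder. By Theorem~\ref{arc forcing} this downward closure, translated through the arc bijection, is exactly the collection of arcs $\alpha$ that have some $\alpha(b_i,a_i,R_i)$ as a subarc. Combined with the remark (preceding Theorem~\ref{quot diagram}) that a permutation is contracted by $\Theta$ if and only if at least one of its canonical joinands is contracted, this reduces the problem to showing the following: a permutation $x$ has some $(b_i,a_i,R_i)$-pattern if and only if some arc of $\delta(x)$ has some $\alpha(b_i,a_i,R_i)$ as a subarc.

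Next I would verify this equivalence arc-by-arc, fixing a single triple $(b,a,R)$. Recall the alternative description of $\delta$ using the plot of $x$: the arc coming from the descent $(x_i,x_{i+1})$ connects the values $x_{i+1}$ and $x_i$, and for any value $c$ with $x_{i+1}<c<x_i$, the value $c$ ends up to the left of this arc precisely when $c$ occupies a position $j<i$ in $x$, and to the right when $j>i+1$. Comparing this with the definition of a subarc, I see that $\alpha(b,a,R)$ is a subarc of the descent arc of $(x_i,x_{i+1})$ if and only if $x_{i+1}\le a<b\le x_i$ and the partition of $\set{a+1,\dots,b-1}$ into $L$ (left of the arc) and $R$ (right of the arc) agrees with the partition induced by appearing left or right of the entries $x_i,x_{i+1}$ in $x$. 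This is precisely the definition of $x$ having a $(b,a,R)$-pattern at position $i$. The equivalence then follows by quantifying over all descents of $x$, equivalently all arcs of $\delta(x)$.

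I do not expect a serious obstacle here: the difficulty is purely bookkeeping, namely confirming that the left/right convention in the definition of $\delta$ matches the left/right convention in the definition of a subarc, so that the condition ``$c\in L$ iff $c$ lies to the left of $x_i,x_{i+1}$'' lines up cleanly with ``$c$ is left of $\alpha(b,a,R)$ iff $c$ is left of the descent arc.'' Once that translation is made, the corollary falls out directly from Theorem~\ref{arc forcing} together with Corollary~\ref{closed subarcs}.
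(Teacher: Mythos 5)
Your proof is correct and follows essentially the same route as the paper: reduce via Corollary~\ref{closed subarcs} (equivalently, via Theorem~\ref{arc forcing} together with the fact that a permutation is contracted iff one of its canonical joinands is) to a statement about subarcs of the arcs in $\delta(x)$, and then translate the subarc condition into the $(b,a,R)$-pattern condition. The paper's own proof is terser — it invokes Corollary~\ref{closed subarcs} and then Theorem~\ref{perm can join} to make the translation — but your more explicit verification of the arc-by-arc equivalence (checking that the subarc conditions (i) and (ii) line up exactly with the inequalities $x_i\geq b$, $x_{i+1}\leq a$ and the left/right placement of $L$ and $R$) is exactly the bookkeeping that the paper leaves implicit.
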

\begin{proof}
By Corollary~\ref{closed subarcs}, a permutation $x$ is uncontracted by $\Theta$ if and only if no arc in $\delta(x)$ has an arc $\alpha(b_i,a_i,R_i)$ as a subarc.
By the definition of $\delta$, this description of the uncontracted permutations is a criterion on the canonical join-representations of the permutations.
Theorem~\ref{perm can join} translates the criterion into the requirement of avoiding $(b_i,a_i,R_i)$ for every $i\in I$.
\end{proof}

The results of \cite{con_app} imply a useful enumerative statement.
The Hasse diagram of the weak order on permutations is dual to the simplicial fan $\F$ defined by the Coxeter arrangement of type A.
Given a congruence $\Theta$ on the weak order, for each congruence class, one can ``glue'' together the cones corresponding to the elements of the congruence class.
In \cite[Theorem~1.1]{con_app}, it is shown, among other things, that for each congruence class, the result of the gluing is a convex cone, that these glued cones form a fan $\F_\Theta$, and that any linear extension of the quotient lattice defines a shelling order on $\F_\Theta$.
When $\F_\Theta$ is simplicial, its $h$-vector (i.e.\ the $h$-vector of the corresponding simplicial complex) counts permutations not contracted by $\Theta$ according to their number of descents.
The fan $\F_\Theta$ is simplicial if and only if the Hasse diagram of the quotient is a regular graph.
By Corollary~\ref{closed subarcs}, we have the following corollary.

\begin{cor}\label{simp h}
Suppose $\Theta$ is a congruence on the weak order on permutations and suppose $U$ is the set of arcs corresponding to join-irreducible permutations not contracted by $\Theta$.
If the fan $\F_\Theta$ is simplicial, then the entry $h_k$ in the $h$-vector of $\F_\Theta$ counts noncrossing arc diagrams consisting of exactly $k$ arcs, all of which are in $U$.
\end{cor}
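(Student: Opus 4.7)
The proof will be a short combination of two results, both already in play in the surrounding discussion. The plan is to chain together the enumerative content of \cite[Theorem~1.1]{con_app}, as recapitulated immediately before the corollary, with the bijective description provided by Corollary~\ref{closed subarcs}.

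More precisely, the first step is to invoke the shellability statement from \cite[Theorem~1.1]{con_app}: when $\F_\Theta$ is simplicial, any linear extension of the quotient lattice $\pidown^\Theta(S_n)$ is a shelling order on $\F_\Theta$, and the standard shelling formula identifies $h_k$ with the number of facets whose shelling restriction has size $k$. As noted in the paragraph preceding the corollary, this count specializes to the number of permutations $x$ that are uncontracted by $\Theta$ and have exactly $k$ descents (the descents of $x$ index the ``new'' facets contributed by the cone corresponding to $x$).

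The second step is to apply Corollary~\ref{closed subarcs}, which gives a bijection between permutations uncontracted by $\Theta$ with exactly $k$ descents and noncrossing arc diagrams consisting of exactly $k$ arcs, all of which lie in $U$. Combining these two identifications immediately yields the claim.

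There is really no obstacle; the only thing one needs to be a bit careful about is pointing the reader to the exact statement in \cite{con_app} that matches descents with the shelling restriction (so that the $h$-vector description is the right one), and then noting that Corollary~\ref{closed subarcs} is the precise bijective bridge from descent-graded enumeration of uncontracted permutations to arc-count-graded enumeration of arc diagrams using arcs in $U$. With those two citations in place, the corollary follows in one line.
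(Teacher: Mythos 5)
Your proposal is correct and matches the paper's argument exactly: the paper likewise derives $h_k$ as the number of uncontracted permutations with $k$ descents via the shelling result of \cite[Theorem~1.1]{con_app}, and then applies Corollary~\ref{closed subarcs} to convert this to a count of arc diagrams with $k$ arcs in $U$.
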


We conclude with some examples.

\begin{example}[Permutations with restricted size of descent]
The \newword{length} of an arc is $1$ plus the number of points the arc passes either left or right of.  
If the points are evenly spaced at unit distance, then the length of an arc is the distance between its endpoints.
Thus the arcs in a noncrossing arc diagram on $n$ points have lengths $1$ through $n-1$.
Fixing some $k\ge 1$, the set of arcs of length less than~$k$ is closed under passing to subarcs, so there is a lattice quotient of the weak order on $S_n$ corresponding to noncrossing arc diagrams with arcs of length less than~$k$.
The corresponding congruence is the smallest congruence that contracts all join-irreducible permutations corresponding to arcs of length~$k$.
The uncontracted permutations $x_1x_2\cdots x_n$ are characterized by the requirement that $x_i-x_{i+1}<k$ for $i=1,\ldots n-1$.
These permutations are also easily counted by induction on~$n$, and we see that the number of noncrossing arc diagrams on $n$ points with arcs of length less than~$k$ is $\prod_{i=1}^n\min(i,k)$.
This is $n!$ for $n\le k$ and $k!k^{n-k}$ for $k\le n$.
\end{example}

\begin{example}[The Tamari lattice and Cambrian lattices of type A]
The set of left arcs is closed under passing to subarcs.
Thus the permutations corresponding to left noncrossing arc diagrams (as defined in the introduction) are a lattice quotient $S_n/\Theta$ of the weak order on $S_n$.
Then $\Theta$ is the smallest congruence contracting all join-irreducible permutations corresponding to right arcs of length $2$.
Thus by Corollary~\ref{perm quot}, permutations not contracted by $\Theta$ are exactly the permutations that avoid $231$ in the usual sense.
We see that $S_n/\Theta$ is the Tamari lattice.

More generally, arbitrarily designate each of the $n$ points either as a right point or a left point.
Consider the set $U$ of arcs that do not pass to the right of any right point and do not pass to the left of any left point.
This set is closed under passing to subarcs.
Let $\Theta$ be the congruence that leaves uncontracted exactly the join-irreducible permutations corresponding to arcs in $U$.
Then $\Theta$ is the smallest congruence contracting the join-irreducible permutations corresponding to arcs of length $2$ that pass right of a right point or left of a left point.
The quotient $S_n/\Theta\cong \pidown^\Theta(S_n)$ is a \newword{Cambrian lattice} of type~A.
The subposet $\pidown^\Theta(S_n)$ has the special property that it is a sublattice of $S_n$.
The fan $\F_\Theta$ is simplicial, and in fact is the normal fan to an associahedron.
Corollary~\ref{simp h} reflects the well-known fact that the $h$-vector of the associahedron is given by the Narayana numbers.
For more information on the Tamari lattice and Cambrian lattice of type~A, see \cite[Sections~5--6]{cambrian}.
\end{example}

\begin{example}[Twisted Baxter permutations/diagrams with left and right arcs]
The set consisting of all left arcs and all right arcs is closed under passing to subarcs.
Thus the noncrossing arc diagrams having only left arcs and right arcs constitute a lattice quotient of the weak order.
This quotient is mentioned in \cite[Section~10]{con_app} and studied extensively in \cite{rectangle}.
Let $\Theta$ be the corresponding congruence.
The left and right arcs are exactly the arcs with no inflections (as defined in Section~\ref{diagrams sec}), so $\Theta$ is the smallest congruence contracting all join-irreducible permutations whose corresponding arcs have length $3$ and one inflection point.
For $n\ge 4$, the subposet $\pidown^\Theta(S_n)$ consisting of uncontracted permutations is not a sublattice of the weak order, and the corresponding fan is not simplicial.
(Both of these facts are easily verified for $n=4$ with the help of Figure~\ref{weak ex}.)

Corollary~\ref{perm quot} implies that $\pidown^\Theta(S_n)$ consist of the permutations having no subsequence $x_i\cdots x_jx_{j+1}\cdots x_k$ with $x_{j+1}<x_i<x_j$ and $x_{j+1}<x_k<x_j$.
These are the \newword{twisted Baxter permutations} of \cite[Section~10]{con_app} and \cite{rectangle}, which are known to be in bijection with the \newword{Baxter permutations} of \cite{CGHK}.
(This is an unpublished result of Julian West.  For a published proof, see \cite[Theorem~8.2]{rectangle} or combine \cite[Theorem~4.14]{giraudo} and \cite[Proposition~4.15]{giraudo}.
See also \cite[Section~2.4]{Dilks}.)
The Baxter permutations are counted \cite{CGHK} by the Baxter numbers, and Theorem~\ref{diagrams left right} follows.

Theorem~\ref{diagrams left right} is interesting in relation to the \newword{twin binary trees} of Dulucq and Guibert \cite{DGStack}.
These are pairs of planar binary trees that are complementary in a certain sense.
These twin trees can be seen when one cuts a diagonal rectangulation along its diagonal.
%(See the discussion following Theorem~\ref{diagrams left right}.)
In a similar way, a noncrossing arc diagram composed of left arcs and right arcs is a pair of Catalan objects---a left noncrossing arc diagram and a right noncrossing arc diagram---that are compatible in the sense that their union is still a noncrossing arc diagram.
(There are two issues:  First, whether the left noncrossing arc diagram and the right noncrossing arc diagram have exactly the same set of arcs connecting adjacent vertices, and second, whether the union satisfies condition~(C2).)
\end{example}

\begin{example}[$2$-clumped permutations/generic rectangulations]
The diagrams whose arcs have at most one inflection point correspond to the \newword{$2$-clumped permutations} of \cite{generic}.
The latter are in bijection \cite[Theorem~4.1]{generic} with generic rectangulations with $n$ rectangles.
Theorem~\ref{diagrams inflection} follows.
More generally, diagrams consisting of arcs with at most $k$ inflection points correspond to the $(k+1)$-clumped permutations.
The $(k+1)$-clumped permutations are not well-studied for $k>1$.
\end{example}

\section*{Acknowledgments}
Thanks to Emily Barnard for helpful comments on several earlier versions of this paper, and in particular for pointing out that Proposition~\ref{pairwise works} needed to be argued.
Thanks also to Vic Reiner and David Speyer for helpful comments.

% Q:  Is there a name for permutations that are alternating but no restriction on whether you start with a descent or a rise?
% A:  zigzag permutations (per wikipedia)

% Q:  What about alternating twisted Baxter permutations in $S_{2n}$?
% A:  The sequence doesn't seem to be in the OEIS.  1,5,31,229,1891,16883,159685,...
%  

%\addtocontents{toc}{\mbox{ }}

\end{document}